\g@addto@macro{\endabstract}{\@setabstract}
\newcommand{\authorfootnotes}{\renewcommand\thefootnote{\@fnsymbol\c@footnote}}%
\newcommand{\beq}{\begin{equation}}
\newcommand{\beqa}{\begin{eqnarray}}
\newcommand{\bx}{\bar{x}^\star}
\newcommand{\eeq}{\end{equation}}
\newcommand{\eeqa}{\end{eqnarray}}
\newcommand{\geqs}{\geqslant}
\newcommand{\leqs}{\leqslant}
\newcommand{\Rb}{{\mathbb R}}
\newcommand{\sgn}{\operatorname{sgn}} %the sign function
\newcommand{\xdownarrow}[1]{{\left\downarrow\vbox to #1{}\right.\kern-\nulldelimiterspace}}
\newtheorem{lemma}{Lemma}%[theorem]
\newtheorem{prop}{Proposition}%[theorem]
\newtheorem{conj}{Conjecture}%[theorem]
\theoremstyle{definition}
\theoremstyle{remark}
\begin{document}

\title[]{Bifurcating solutions in a non-homogeneous boundary value problem for a nonlinear pendulum equation.}

%    Information for first author
\author[F.P. da Costa]{Fernando P. da Costa}
%    Address of record for the research reported here
\address{Departamento de Ci\^encias e Tecnologia, Universidade Aberta, Lisboa, Portugal, and
Centre for Mathematical Analysis, Geometry, and Dynamical Systems, Instituto Superior T\'e\-cni\-co,
Universidade de Lisboa, Lisboa, Portugal}
\email{fcosta@uab.pt}
%    \thanks will become a 1st page footnote.
%\thanks{}

%    Information for second author

\author[M. Grinfeld]{Michael Grinfeld}
\address{Department of Mathematics and Statistics, University of Strathclyde, 
	Glasgow, United Kingdom}
\email{m.grinfeld@strath.ac.uk}

%    Information for third author

\author[J.T. Pinto]{Jo\~ao T. Pinto}
\address{Departamento de Matem\'atica, Instituto Superior T\'ecnico, Universidade de Lisboa, Lisboa, Portugal, and
	Centre for Mathematical Analysis, Geometry, and Dynamical Systems, Instituto Superior T\'e\-cni\-co,
	Universidade de Lisboa, Lisboa, Portugal}
\email{jpinto@math.tecnico.ulisboa.pt}

%    Information for fourth author

\author[K. Xayxanadasy]{Kedtysack Xayxanadasy}
\address{Department of Mathematics, Faculty of Natural Sciences, National University of Laos, 
Dongdok Campus, Vientiane, Laos}
\email{kedtysack@gmail.com}

%    General info
\subjclass[2010]{Primary 34B15, 34C23; Secondary 41A60.}

\date{July 29, 2019}% and, in revised form, month day, year.}

\keywords{Non-homogeneous two-points boundary value problems; Bifurcations; Nonlinear pendulum; Asymptotic evaluation of integrals.}

%\footnotetext{Partially funded by FCT/Portugal through project RD0447/CAMGSD/2015.}

\maketitle

%%%%%%%%%%%%%%%%%%%%%%%%%%%%%%%%%%%%%%%%%%%%%%%%%%%%%%%%%%%%%%%%%%%%%%%%%%%%%%%%%%%%%%%%%%%%%%%%%%%%%%%%%%%%%%%%%%%%

%=============================================================================
%                                                                                 Abstract
%=============================================================================

\begin{abstract}
	Motivated by recent studies of bifurcations in liquid crystals cells \cite{cggp,cmp}
	we consider a nonlinear pendulum ordinary differential equation in the bounded interval $(-L, L)$ 
	with non-homogeneous mixed boundary conditions (Dirichlet an one end of the 
	interval, Neumann at the other) and study the bifurcation diagram of its solutions 
	having as bifurcation parameter the size of the interval, $2L$,
	and using techniques from phase space analysis,
	time maps, and asymptotic estimation of integrals, 
	complemented by appropriate numerical evidence. 
\end{abstract}

%\maketitle

%==================================

\section{Introduction}\label{sec:intro}

Motivated by the study of the twist-Fr\'eedericksz transition in 
a nematic liquid crystal cell, a non-homogeneous Dirichlet 
boundary value problem for the nonlinear pendulum equation
\begin{equation}
x''(t) + \sin 2x(t) = 0, \label{pendulum}
\end{equation}
for $t$ in the interval $[-L, L],$ was considered in recent papers \cite{cggp,cmp}, 
and the structure of the bifurcating solutions when the parameter $L$ is 
changed was studied.

\medskip

In this paper we consider again the existence of solutions for a non-homogeneous boundary 
value problem for equation~\eqref{pendulum}, this time 
with a Dirichlet condition at $t=-L$ and a Neumann one at $t=L,$
a case that may be relevant for modelling the twist-Fr\'eedericksz transition in
a cholesteric liquid crystal cell \cite{nm}. The problem that will be considered
is the following, illustrated in Figure~\ref{fig1},

\begin{eqnarray}
& & 
\begin{cases}
x'=y\\
y'=-\sin 2x,
\end{cases} \label{originalsystem} \\
& & 
\rule{0mm}{5mm}x(-L)=-\phi,\qquad y(L)=\phi^\star.
\label{originalbc}
\end{eqnarray}

\noindent 
where $\phi^\star := \sqrt{1-\cos 2\phi}.$ 
%%%%%%%%%%%%%%%%%%%%%%%%%%%%%%%%%%%%%%%%%%%%%%%%%%%%%%%%%%%%%%%%%%%%%%%%%%%%%%%%%%%%%%%%%%
%
%
%
\begin{figure}[h]
	\psfragfig*[scale=0.60]{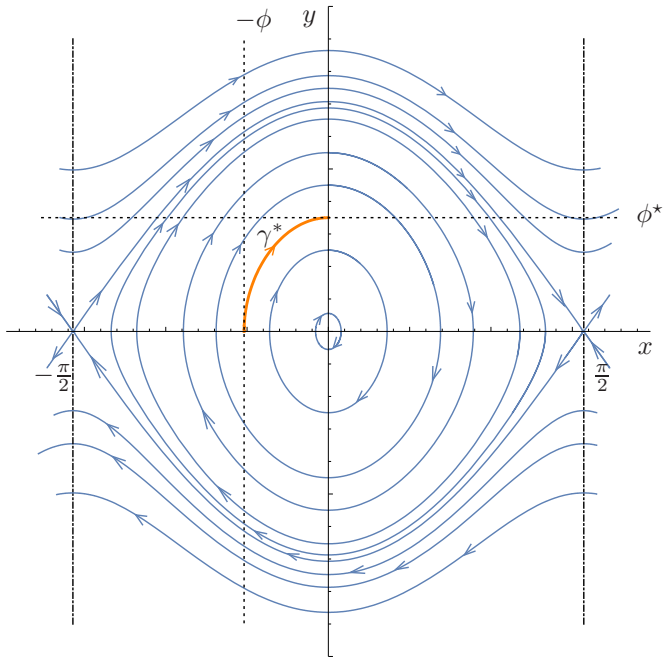}{%
	\psfrag{x}{$x$}
	\psfrag{y}{$y$}
	\psfrag{fi}{$-\phi$}
	\psfrag{fi*}{$\phi^\star$}
	\psfrag{g}{$\gamma^*$}
	\psfrag{pi/2}{$\frac{\pi}{2}$}
	\psfrag{-pi/2}{$-\frac{\pi}{2}$}
}
	\caption{Phase plot of the orbits of equation~\eqref{originalsystem}, the boundary conditions
		\eqref{originalbc} to be considered, and the orbit $\gamma^*$ referred to in the text. 
		The straight lines $x= -\frac{\pi}{2}$ and $x=\frac{\pi}{2}$ are to be identified.}\label{fig1}
\end{figure}
%
%
%
%%%%%%%%%%%%%%%%%%%%%%%%%%%%%%%%%%%%%%%%%%%%%%%%%%%%%%%%%%%%%%%%%%%%%%%%%%%%%%%%%%%%%%%%%%
System \eqref{originalsystem} has a first integral given by 
\begin{equation}
V(x,y) = y^2 -\cos 2x. \label{V}
\end{equation}
and from $V(-\phi,0)=V(0,\phi^\star)$ we conclude that the points $(-\phi, 0)$ 
and $(0,\phi^\star)$ lie on the
same orbit of \eqref{originalsystem}, as illustrated in Figure~\ref{fig1} by the orbit denoted by $\gamma*$. 
This relation between the conditions imposed at the two boundary points entails a certain symmetry in the 
allowed solutions, akin to what happened in the case of Dirichlet boundary conditions studied in \cite{cggp}, 
and is a reasonable first step towards the understanding of the general case. 

\medskip

The tools used in this paper are based on appropriately defined time maps, measuring the
time spent by a given orbit between two of its points. According to what will be most
appropriate for the computations, we will identify an orbit by
the ordinate of its first intersection either with the $x$-axis, the $y$-axis, or the
line $x=-\phi$, leading to different, although equivalent, time maps.

\medskip

We study the bifurcation diagram of solutions to \eqref{originalsystem}--\eqref{originalbc} 
using the following procedure: we start by identifying a segment of an orbit
of \eqref{originalsystem}, $\gamma^*$, such that the corresponding solution, in
addition to satisfying the boundary conditions \eqref{originalbc}, $x(-L)=-\phi$ and
$y(L)=\phi^*$, also satisfies $y(-L)=0$ and $x(L) = 0$ (see Figure~\ref{fig1}).
We call the solution corresponding to $\gamma^*$ a \emph{critical solution}
(and $\gamma^*$ a \emph{critical (segment of an) orbit}).
Calling
this solution \emph{critical} is justified as we will prove that in
bifurcation diagrams parameterized by $L$, there is more than one
solution branch passing through it.
To this (segment of) orbit $\gamma^*$ corresponds a \emph{critical time} $T^*$, 
and a corresponding critical value of
$L=L^* = T^*/2>0.$ 

We then perturb this (segment of) orbit and investigate 
how the time spent changes relative to $T^*$. This time is
measured by adequately defined time maps, whose definition arises naturally
from the phase portrait and the first integral \eqref{V} (see, e.g. \cite{korman,smoller}).
This approach was 
used in \cite{cggp,cmp} for the study of \eqref{originalsystem}
with non-homogeneous Dirichlet boundary conditions; 
its application in the present case led to
some unexpected difficulties and the analytical study had to be completed with numerical
simulations providing solid evidence for a conjecture about the existence of
a single minimum of the time maps of some solution branches.

\section{Time maps: definition and basic results}\label{sec:timemaps}

For every $\alpha\in\left(0, \frac{\pi}{2}\right)$, the
orbit $\gamma_{\alpha}$ of \eqref{originalsystem} that intersects the $x$-axis at $(-\alpha, 0)$
is periodic. Using \eqref{originalsystem} and the first integral \eqref{V} the time taken 
from the point of intersection  of $\gamma_{\alpha}$ with the negative-$x$ semi-axis, 
$(-\alpha, 0),$ to the first intersection with the positive-$y$ semi-axis, 
occurring at the point $(0,\sqrt{2}\sin\alpha),$ is given by the following time map

\begin{equation}
T(\alpha) := \int_0^\alpha\frac{1}{\sqrt{\cos 2x-\cos 2\alpha}}dx. \label{timemapT}
\end{equation}

We will also need to measure the time taken by $\gamma_{\alpha}$ described above
between its point of intersection with the positive-$y$ semi-axis, $(0,\sqrt{2}\sin\alpha),$
and the point of its first intersection with the vertical line $x=\nu$, with $\nu\in(0,\tfrac{\pi}{2}).$
In the same way as above, the fact that \eqref{V} is a first integral allows us to conclude that
this time is given by the time map

\begin{equation}
T_1(\alpha,\nu) := \int_0^\nu\frac{1}{\sqrt{\cos 2x-\cos 2\alpha}}dx. \label{timemapT1}
\end{equation}

Observe that $T_1(\phi,\phi) = T(\phi).$

\medskip

The proof of the following result can be consulted in \cite{cggp}.

\begin{prop}\label{proptimemaps}%\mbox{}\newline
	Let $0<\phi<\alpha<\frac{\pi}{2}.$ The time maps $T$ and $T_1$ defined by
	\eqref{timemapT} and \eqref{timemapT1}, respectively, satisfy:
	\begin{enumerate}
		\item $\alpha\mapsto T(\alpha)$ is strictly increasing, and converges to $+\infty$ as
		$\alpha\to \frac{\pi}{2}$ and to $\frac{\pi}{2\sqrt{2}}$ as $\alpha\to 0.$
		\item $\alpha\mapsto T_1(\alpha,\phi)$ is strictly decreasing.
	\end{enumerate}
\end{prop}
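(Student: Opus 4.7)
The plan is to prove (1) by reducing $T(\alpha)$ to a complete elliptic integral of the first kind via a standard substitution, from which monotonicity and both limits can be read off. Part (2) will then follow by differentiating $T_1$ under the integral sign, since when $\alpha>\phi$ the integrand is manifestly decreasing in $\alpha$.

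For (1), I would first normalize the interval in \eqref{timemapT} by setting $u = \sin x/\sin\alpha$. Using $\cos 2x - \cos 2\alpha = 2(\sin^2\alpha - \sin^2 x) = 2\sin^2\alpha\,(1-u^2)$ together with $dx = \sin\alpha\, du/\sqrt{1-u^2\sin^2\alpha}$, the integral transforms to
\[
T(\alpha) \;=\; \frac{1}{\sqrt{2}}\int_0^1 \frac{du}{\sqrt{(1-u^2)(1-u^2\sin^2\alpha)}},
\]
so $T(\alpha) = K(\sin\alpha)/\sqrt{2}$, where $K$ denotes the complete elliptic integral of the first kind. For each fixed $u\in(0,1)$ the integrand is strictly increasing in $\alpha$ on $(0,\pi/2)$, which yields strict monotonicity of $T$ directly. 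The limits follow from classical convergence arguments: monotone convergence as $\alpha\to 0^+$ gives
\[
T(\alpha)\to \frac{1}{\sqrt{2}}\int_0^1 \frac{du}{\sqrt{1-u^2}} \;=\; \frac{\pi}{2\sqrt{2}},
\]
while as $\alpha\to (\pi/2)^-$ the integrand converges pointwise to the non-integrable $1/(1-u^2)$, so Fatou's lemma gives $T(\alpha)\to +\infty$.

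For (2), since $0<\phi<\alpha<\pi/2$ the integrand of $T_1(\alpha,\phi)$ is continuous on $[0,\phi]$ (the denominator never vanishes there) and smooth in $\alpha$. Differentiation under the integral sign is therefore legitimate, and
\[
\frac{\partial T_1}{\partial \alpha}(\alpha,\phi) \;=\; -\int_0^\phi \frac{\sin 2\alpha}{(\cos 2x-\cos 2\alpha)^{3/2}}\,dx \;<\; 0,
\]
which establishes strict monotonicity.

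The single delicate point is the blow-up as $\alpha\to (\pi/2)^-$ in (1), because the upper limit of integration in \eqref{timemapT} coincides with the endpoint singularity of the integrand. Recognising the integral as $K(\sin\alpha)/\sqrt{2}$ sidesteps this issue by turning the claim into the well-known fact that $K(k)\to +\infty$ as $k\to 1^-$; otherwise one would have to estimate the integral near $x=\alpha$ by hand, which is where a reader unfamiliar with the trick would spend most of the effort.
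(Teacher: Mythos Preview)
Your argument is correct and self-contained. The paper itself does not prove this proposition; it defers the reader to reference \cite{cggp}, so there is no in-paper proof to compare against. Your reduction of $T(\alpha)$ to $K(\sin\alpha)/\sqrt{2}$ via $u=\sin x/\sin\alpha$ is the standard and efficient route, and the monotonicity and both limits follow exactly as you say (indeed, since the integrand is monotone in $\alpha$, monotone convergence already handles both limits without needing Fatou). Part (2) is immediate once one notes that $\cos 2x - \cos 2\alpha \geqslant \cos 2\phi - \cos 2\alpha > 0$ on $[0,\phi]$ when $\phi<\alpha$, which justifies the differentiation; this same derivative formula is used elsewhere in the paper (Section~\ref{subsec:typeI}), so your computation is consistent with what the authors use downstream.
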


To study the orbits located above the homoclinic orbit to $(\tfrac{\pi}{2},0)\equiv(-\tfrac{\pi}{2},0)$
in the positive-$y$ semi-plane we use as an identifying parameter its intersection with some
positive line (instead of the parameter $\alpha$ above that in these cases is nonexistent, since these
orbits do no intersect the $x$-axis). In \cite{cggp,cmp} the parameter used in these cases was the
ordinate $\beta$ of the intersection of the orbit with the positive-$y$ semi-axis. Here
we shall use as parameter the value $z=y(-L)^2$, i.e., the square of the intersection of the
orbit with the vertical line $x= -\phi$, or, in terms of the original boundary value problem,
the square of the value of the derivative of the solution $x(t)$ at the boundary point $t=-L$. 
For orbits intersecting the $x$-axis we can easily relate the parameters $\alpha$ and $z$ 
using the first integral \eqref{V}:
$V(-\alpha, 0) = V(-\phi, \sqrt{z}).$ In order not to overload the notation we shall use the same
symbols, $T$ or $T_1$, for the time maps independently of which variable, $\alpha$ or $z$,
is being used in the 
parametrization of the orbits.

\section{Phase space analysis of orbits bifurcating from $\gamma^*$}\label{sec:phaseplane*}

Let $\gamma^*$ be the orbit of \eqref{originalsystem}--\eqref{originalbc} shown
in Figure~\ref{fig1}. Let $T^*=2L^*$ be the time taken by this orbit.
This orbit rests on the periodic orbit of \eqref{originalsystem} intersecting the negative
$x$-axis at $x=-\phi.$ Slightly perturbing this supporting periodic orbit to another
whose intersection with the negative $x$-axis is at $-\alpha < -\phi$, with $\alpha-\phi$ 
sufficiently small, we easily conclude from the phase portrait and from the
continuous dependence of solutions of ODEs on the initial data over finite time intervals, 
that there exists four distinct orbits satisfying \eqref{originalsystem}--\eqref{originalbc} 
for appropriately chosen values of $L$ close to $L^*$. We shall denote these
as solutions of type $I$, $A$, $B$, and $C$, as illustrated in Figure~\ref{types}.
% \marginpar{\fbox{Fig. 2 is to be redrawn}}
%%%%%%%%%%%%%%%%%%%%%%%%%%%%%%%%%%%%%%%%%%%%%%%%%%%%%%%%%%%%%%%%%%%%%%%%%%%%%%%%%%%%%%%%%%
%
%
%
\begin{figure}[h]
	\psfragfig*[scale=0.50]{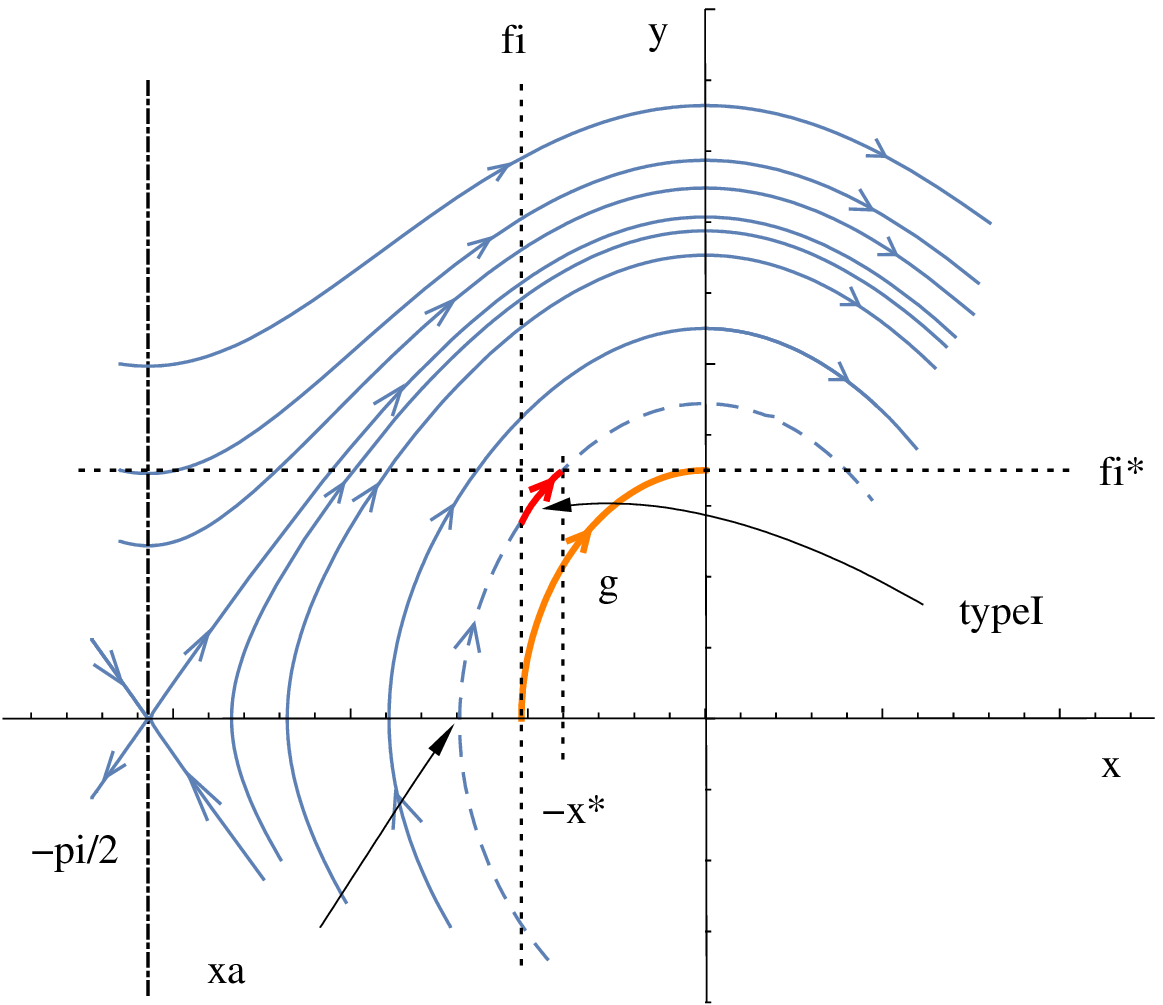}{%
		\psfrag{x}{$x$}
		\psfrag{y}{$y$}
		\psfrag{fi}{$-\phi$}
		\psfrag{fi*}{$\phi^\star$}
		\psfrag{x*}{$x^\star$}
		\psfrag{-x*}{$-x^\star$}
		\psfrag{g}{$\gamma^*$}
		\psfrag{xa}{$x=-\alpha$}
		\psfrag{typeA}{$\text{type A}$}
		\psfrag{typeB}{$\text{type B}$}
		\psfrag{typeC}{$\text{type C}$}
		\psfrag{typeI}{$\text{type I}$}
		\psfrag{-pi/2}{$-\frac{\pi}{2}$}
	}
	\psfragfig*[scale=0.50]{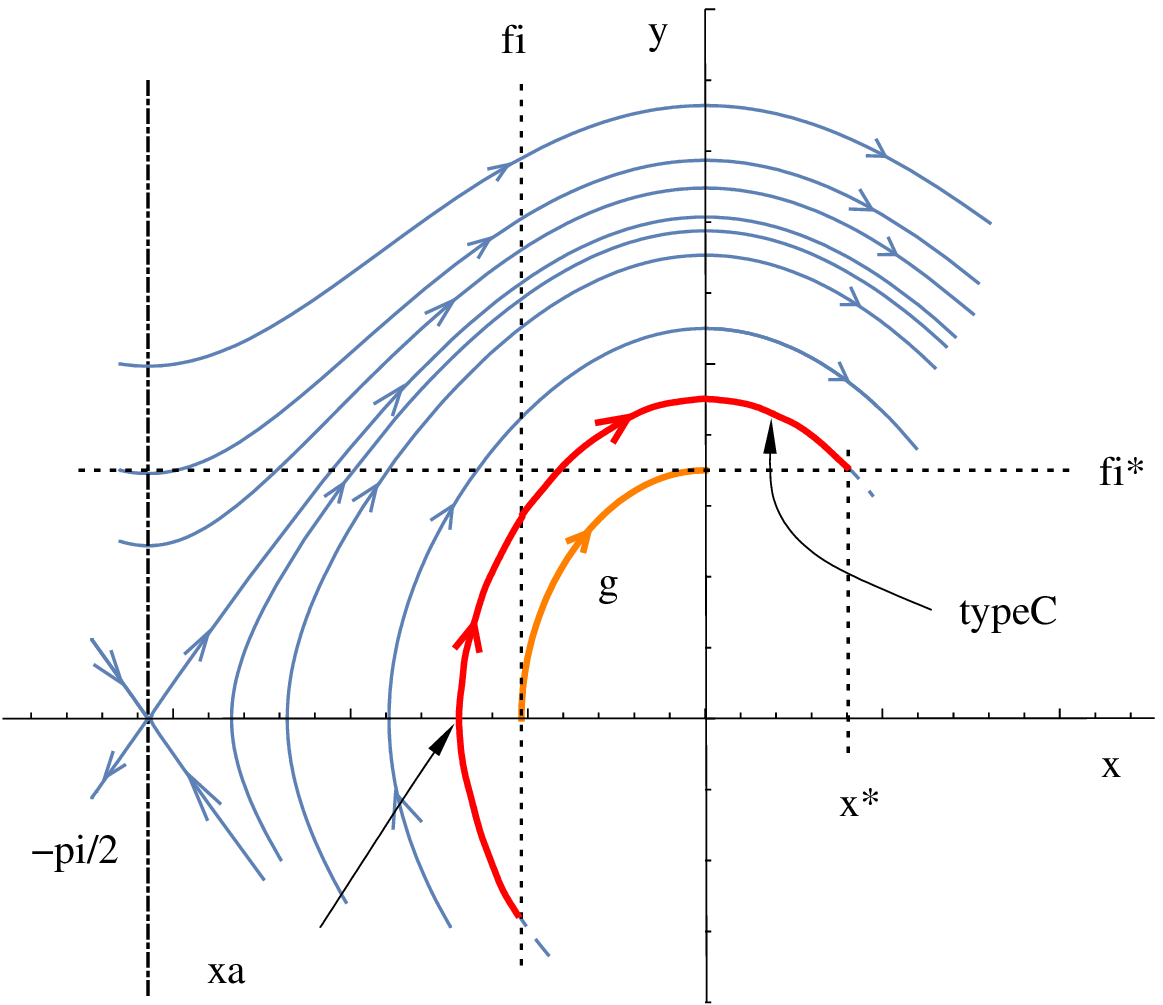}{%
		\psfrag{x}{$x$}
		\psfrag{y}{$y$}
		\psfrag{fi}{$-\phi$}
		\psfrag{fi*}{$\phi^\star$}
		\psfrag{x*}{$x^\star$}
		\psfrag{-x*}{$-x^\star$}
		\psfrag{g}{$\gamma^*$}
		\psfrag{xa}{$x=-\alpha$}
		\psfrag{typeA}{$\text{type A}$}
		\psfrag{typeB}{$\text{type B}$}
		\psfrag{typeC}{$\text{type C}$}
		\psfrag{typeI}{$\text{type I}$}
		\psfrag{-pi/2}{$-\frac{\pi}{2}$}
	}
	\psfragfig*[scale=0.50]{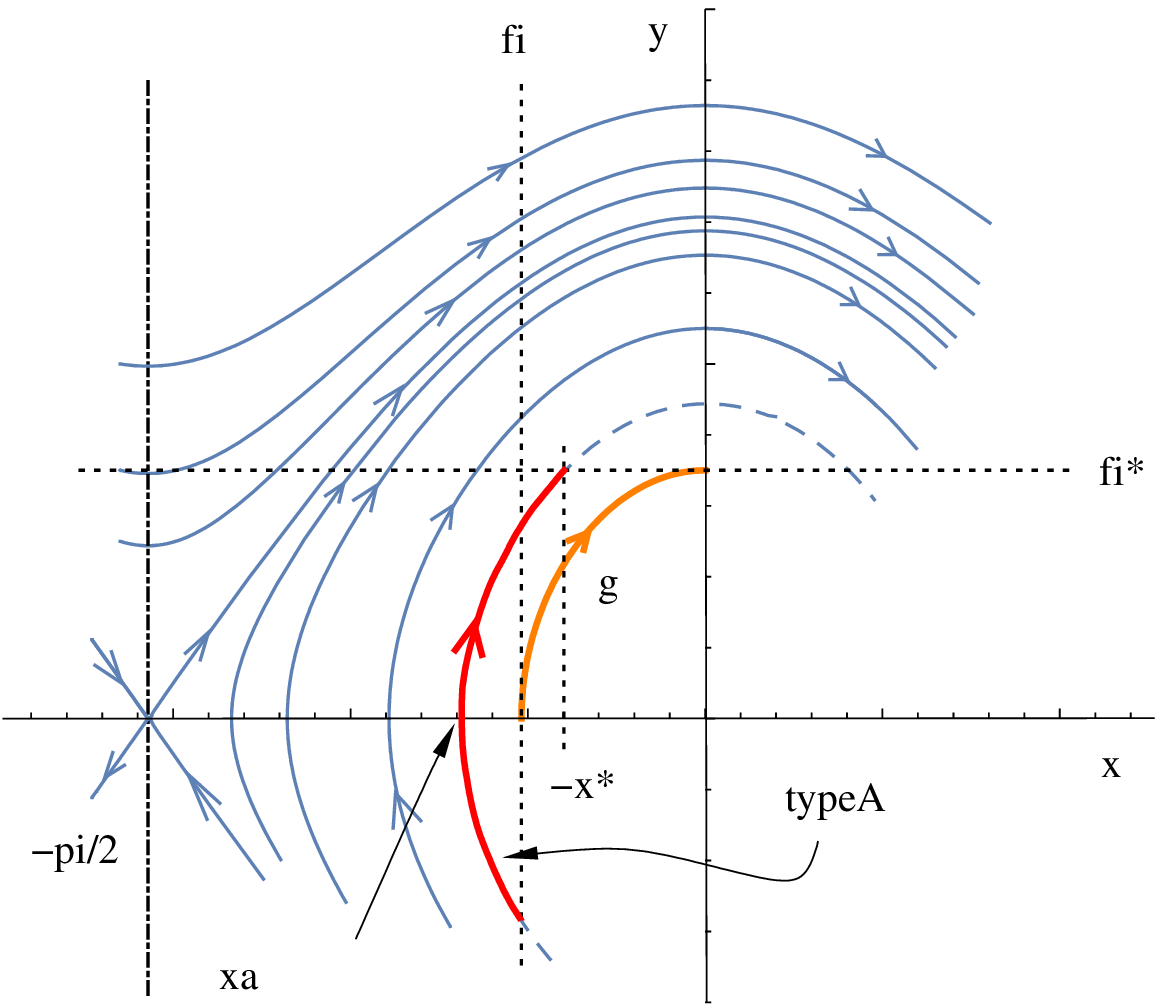}{%
		\psfrag{x}{$x$}
		\psfrag{y}{$y$}
		\psfrag{fi}{$-\phi$}
		\psfrag{fi*}{$\phi^\star$}
		\psfrag{x*}{$x^\star$}
		\psfrag{-x*}{$-x^\star$}
		\psfrag{g}{$\gamma^*$}
		\psfrag{xa}{$x=-\alpha$}
		\psfrag{typeA}{$\text{type A}$}
		\psfrag{typeB}{$\text{type B}$}
		\psfrag{typeC}{$\text{type C}$}
		\psfrag{typeI}{$\text{type I}$}
		\psfrag{-pi/2}{$-\frac{\pi}{2}$}
	}		
	\psfragfig*[scale=0.50]{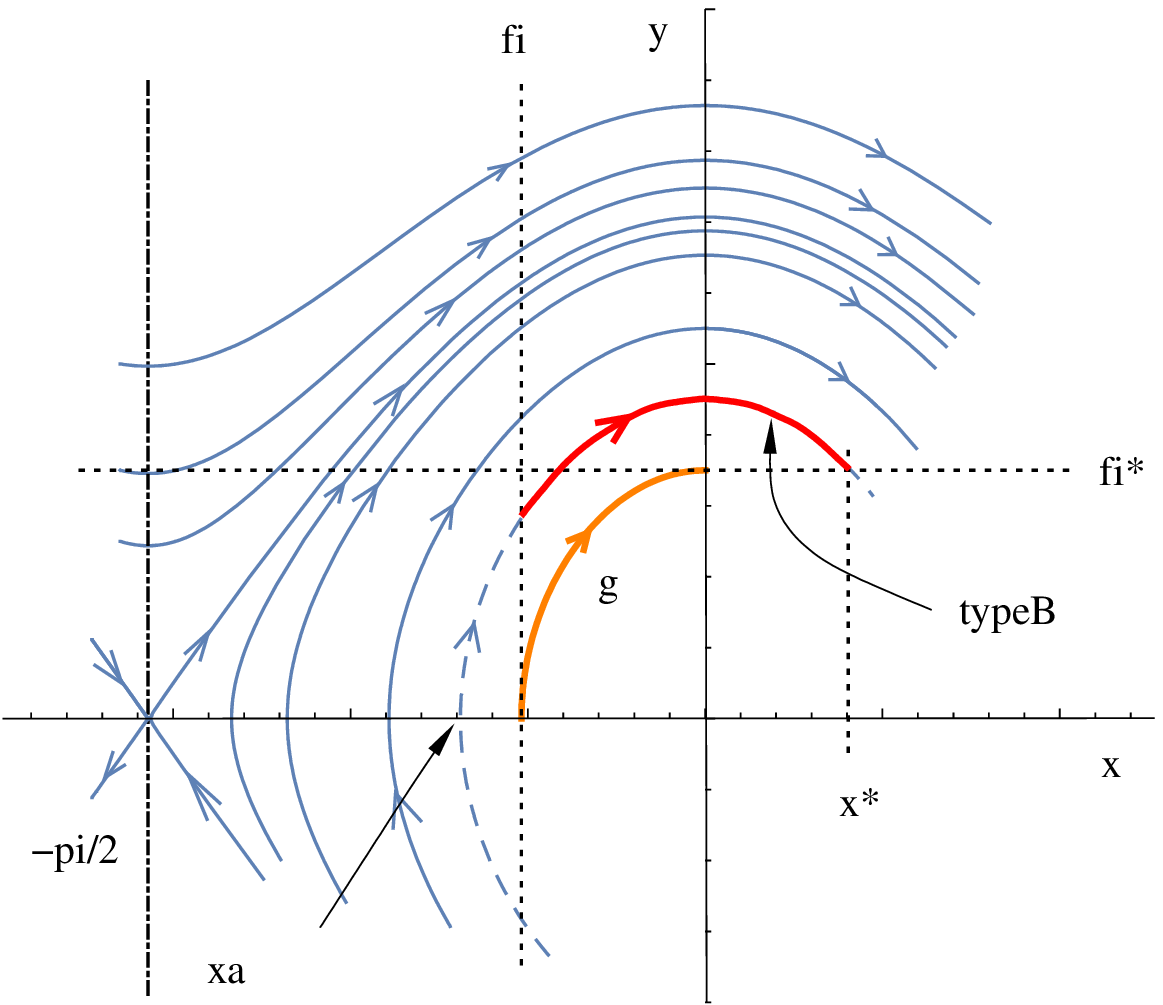}{%
		\psfrag{x}{$x$}
		\psfrag{y}{$y$}
		\psfrag{fi}{$-\phi$}
		\psfrag{fi*}{$\phi^\star$}
		\psfrag{x*}{$x^\star$}
		\psfrag{-x*}{$-x^\star$}
		\psfrag{g}{$\gamma^*$}
		\psfrag{xa}{$x=-\alpha$}
		\psfrag{typeA}{$\text{type A}$}
		\psfrag{typeB}{$\text{type B}$}
		\psfrag{typeC}{$\text{type C}$}
		\psfrag{typeI}{$\text{type I}$}
		\psfrag{-pi/2}{$-\frac{\pi}{2}$}
	}		
	\caption{Phase space illustration of the four types of orbits of \eqref{originalsystem}--\eqref{originalbc}
		obtained by a perturbation of the orbit $\gamma^*$, denoted by $A, B, C,$ and $I$.
		}\label{types}
\end{figure}
%
%
%
%%%%%%%%%%%%%%%%%%%%%%%%%%%%%%%%%%%%%%%%%%%%%%%%%%%%%%%%%%%%%%%%%%%%%%%%%%%%%%%%%%%%%%%%%% 

Taking into account the time maps defined in Section~\ref{sec:timemaps} the time spent
in each orbit of the above types is given by, respectively,

\begin{eqnarray}
T_I(\alpha,\phi) & := & T_1(\alpha, \phi) - T_1(\alpha, x^\star) \label{TimeI} \\
T_A(\alpha,\phi) & := & 2T(\alpha) - \left(T_1(\alpha, \phi) + T_1(\alpha, x^\star)\right)\label{TimeA} \\
T_B(\alpha,\phi) & := & T_1(\alpha, \phi) + T_1(\alpha, x^\star)\label{TimeB} \\
T_C(\alpha,\phi) & := & 2T(\alpha) - \left( T_1(\alpha, \phi)-T_1(\alpha, x^\star) \right) \label{TimeC} 
\end{eqnarray}
where $x^\star$ is the positive solution of $V(x^\star,\phi^\star)=V(-\alpha,0),$ i.e., 
\begin{equation}
x^\star = x^\star(\alpha,\phi) := \frac{1}{2}\arccos(1 - \cos 2\phi +\cos 2\alpha).\label{xstaralpha}
\end{equation}

From the phase portraits in figures~\ref{fig1} and \ref{types} we conclude that
 orbits of type $A$ and $C$ can be continued down to $-\alpha \downarrow -\frac{\pi}{2}$,
which corresponds to their initial point converging to points on the homoclinic orbit to $(-\frac{\pi}{2},0)\equiv(\frac{\pi}{2},0)$ in $\{y<0\},$ but not further down: if the
initial point gets to, or below, this homoclinic orbit the corresponding orbit remains
in $\{y<0\}$, and the solution will not satisfy the boundary condition $y(L)=\phi^\star>0$, for
any value of $L$.

\medskip

In contradistinction with these cases, in principle there is no obstruction to orbits of types 
$I$ and $B$ to be continued above the homoclinic orbit to $(-\frac{\pi}{2},0)\equiv(\frac{\pi}{2},0)$ in $\{y>0\}.$ To properly handle this possibility it is convenient to parameterize the orbits, and the corresponding time maps, not
by $\alpha$ but by either the ordinate of its intersection with the positive $y$-axis,
$\beta$, or by the ordinate of its initial point $y(-L)$, or, as we shall use 
in Section~\ref{subsec:globalB}, by the square of this quantity $z:=y(-L)^2$. 
Using these parameterizations the variable $\alpha$ in the function $x^\star$ needs
to be correspondingly changed to $\beta$, $y(-L)$, or $z$, which is easily done using 
the fact that $V$ is a first integral to relate the various parameters, 
$V(-\alpha,0) = V(0,\beta)=V(-\phi, y(-L))=V(-\phi, \sqrt{z}),$ leading to the
corresponding expressions
for $x^\star$. One that we shall frequently use in what follows is the expression
in terms of $z$:   
\begin{equation}
\bx(z) := \frac{1}{2}\arccos(1-z).\label{xstarz}
\end{equation}

\medskip

Notwithstanding the possibility of these orbits to be continued above the homoclinic, 
they cannot be continued for arbitrarily large values of $y(-L)$, although
for different reasons, as we shall see below.

\medskip

Type $I$ orbits obviously cease to exist when $y(-L)=\phi^\star$, since, when this occurs,
the initial and final points of the orbit coincide (see Figure~\ref{figtypeIend}.) 
%%%%%%%%%%%%%%%%%%%%%%%%%%%%%%%%%%%%%%%%%%%%%%%%%%%%%%%%%%%%%%%%%%%%%%%%%%%%%%%%%%%%%%%%%%
%
%
%
\begin{figure}[h]
	\psfragfig*[scale=0.85]{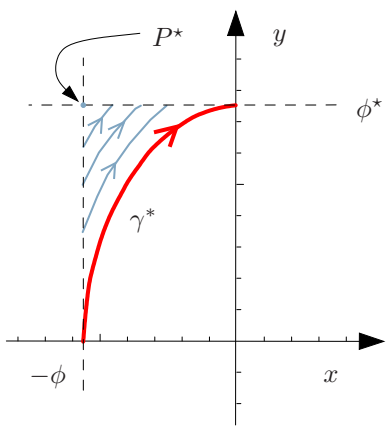}{%
			\psfrag{x}{$x$}
			\psfrag{y}{$y$}
			\psfrag{P}{$P^\star$}			
			\psfrag{x=-}{$-\phi$}
			\psfrag{x=+}{$\phi^\star$}
			\psfrag{g}{$\gamma^*$}
	}
	\caption{Continuation of type $I$ orbits of \eqref{originalsystem}--\eqref{originalbc}
		for the initial point of the orbit with increasing values of $y(-L)$, 
		showing that, when $y(-L)\to \phi^\star$, these 
		orbits converge to a single point $P^\star=(-\phi, \phi^\star)$ and then
		vanish.
	}\label{figtypeIend}
\end{figure}
%
%
%
%%%%%%%%%%%%%%%%%%%%%%%%%%%%%%%%%%%%%%%%%%%%%%%%%%%%%%%%%%%%%%%%%%%%%%%%%%%%%%%%%%%%%%%%%% 
The
reason why these orbits cannot be continued above this value of $y(-L)$ is easy to understand
from the phase portrait: 
since the initial and final points of type $I$ orbits are always regular points 
of the phase plane, having $y(-L)$ approaching the limit value $\phi^\star$ we have type $I$ 
orbits taking less and less time $2L$, with $L\to 0$ as $y(-L)\to \phi^\star$,
and thus \eqref{originalsystem}--\eqref{originalbc} having no sense in the limit.
In fact, analysis of the time maps tell us exactly the same: 
taking $z\to (\phi^\star)^2$ in \eqref{TimeI} (with the
variable $z$ instead of $\alpha$) and noting that, by \eqref{xstarz}
and the definition of $\phi^\star$, 
$\displaystyle{\lim_{z\to (\phi^\star)^2}\bx(z)	= \phi}$, it immediately follows that $T_I(z)\to 0$. In section~\ref{subsec:typeI} 
a study of the monotonicity of $T_I$ will be presented. 

\medskip

The situation for type $B$ orbits is more interesting. Since all orbits of \eqref{originalsystem}
above the orbit homoclinic to $(-\frac{\pi}{2},0)\equiv(\frac{\pi}{2},0)$ in $\{y>0\}$ have
an absolute minimum at $x=\frac{\pi}{2}$, there are no type $B$ orbits
resting on an orbit of \eqref{originalsystem}
if the $y$-component of that minimum is bigger that $\phi^\star$,
since in this case no segment of the orbit (and in particular the one we call 
type $B$ orbit) can satisfy the boundary condition $y(L)=\phi^\star$. 
Using
the first integral $V$ this means that the largest value of $y(-L)$ that a
type $B$ orbit must satisfy is given by
$V(-\phi, y(-L)) = V(\frac{\pi}{2},\phi^\star)$, and so, from 
$\phi^\star = \sqrt{1-\cos 2\phi},$ we must
have $y(-L)= \sqrt{2},$ independently of $\phi.$ 
This implies that orbits of type $B$ only exist
for $y(-L)\in (0, \sqrt{2}]$. 

\medskip

To understand what is going on in this case we observe that,
 due to the periodicity of the vector field, for initial
points $(-\phi, y(-L))$ with $y(-L)$ bigger than
the ordinate of the point on the homoclinic orbit (but less than $\sqrt{2}$), there
is another orbit with end point $(-x^\star, \phi^\star)$. This orbit is part of a new class
of orbits we shall call type $B'$. See Figure~\ref{figtypeBB'end}.
%%%%%%%%%%%%%%%%%%%%%%%%%%%%%%%%%%%%%%%%%%%%%%%%%%%%%%%%%%%%%%%%%%%%%%%%%%%%%%%%%%%%%%%%%%
%
%
%
\begin{figure}[h]
	\psfragfig*[scale=0.55]{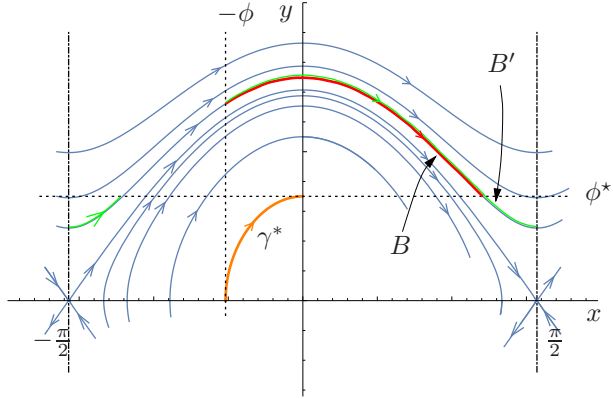}{%
		\psfrag{x}{$x$}
		\psfrag{y}{$y$}
		\psfrag{B}{$B$}
		\psfrag{B1}{$B'$}
		\psfrag{fi}{$-\phi$}
		\psfrag{fi*}{$\phi^\star$}
		\psfrag{g}{$\gamma^*$}
		\psfrag{pi/2}{$\frac{\pi}{2}$}
		\psfrag{-pi/2}{$-\frac{\pi}{2}$}
	}
	\caption{An orbit of \eqref{originalsystem}--\eqref{originalbc} of
		type $B$ above the homoclinic orbit and the orbit of type $B'$
		with the same $y(-L)$.
	}\label{figtypeBB'end}
\end{figure}
%
%
%
%%%%%%%%%%%%%%%%%%%%%%%%%%%%%%%%%%%%%%%%%%%%%%%%%%%%%%%%%%%%%%%%%%%%%%%%%%%%%%%%%%%%%%%%%%
When $y(-L)\to \sqrt{2}$ the two end points of orbits of types $B$ and $B'$ 
converge to one another and at $y(-L)=\sqrt{2}$ the two coincide
with the point $(\frac{\pi}{2},\phi^\star)$, and both cease to exist 
for $y(-L)>\sqrt{2}.$

\section{Bifurcation diagram of orbits bifurcating from $\gamma^\star$}\label{sec:bifurcation*}

To draw the bifurcation diagram of orbits bifurcating from $\gamma^\star$ we
need to put together the information in Section~\ref{sec:phaseplane*}, 
gathered from the phase portrait, with information about the time spent by each orbit,
obtained from the study of the time maps, which we will do next.

\subsection{Behavior of type $I$ solutions branch}\label{subsec:typeI}

We first consider solutions of type $I$. 
From \ref{TimeI}, the definition of the time maps \eqref{timemapT} and \eqref{timemapT1}, 
and Proposition~\ref{proptimemaps}, we conclude that 
\[
T_I(\alpha,\phi) = T_1(\alpha, \phi) - T_1(\alpha, x^\star) < T_1(\alpha, \phi) < T_1(\phi, \phi)  =
T(\phi) = T^* = 2L^*.
\] 
Thus, in the bifurcation diagram plotted using the time spent by the orbit as the
bifurcation parameter, type $I$ branch of solutions exist to the left of the bifurcation 
point $T^*$ correspondent to the critical orbit $\gamma^\star.$
Let us compute the derivative $\partial T_I/\partial \alpha.$ From 
\begin{eqnarray*}
	\frac{\partial T_I}{\partial \alpha}(\alpha,\phi)  
	& = &  
	\frac{\partial T_1}{\partial \alpha}(\alpha,\phi) - 
	\frac{\partial T_1}{\partial \alpha}(\alpha,x^\star) -
	\frac{\partial T_1}{\partial x^\star}\frac{dx^\star}{d\alpha} \\
	& = & -  \int_{x^\star(\alpha)}^{\phi}\frac{\sin 2\alpha}{(\cos 2x-\cos 2\phi)^{\frac{3}{2}}}dx \\
	\!\!\!\!&  & \!\! - \;
	\frac{2\sin 2\alpha}{\sqrt{\cos 2x^\star(\alpha)-\cos 2\phi}\sqrt{1-(1-\cos 2\phi - \cos 2\alpha)^2}},
\end{eqnarray*}	
we conclude that $\partial T_I/\partial \alpha <0$, since it is clear from the definition 
of type $I$ solutions that we always have $x^\star < \phi$
(see Figure~\ref{figtypeIend}). This means that the branch of type $I$ solutions 
in the bifurcation diagram has no turning points.

\medskip

The above computations were done using the parametrization of orbits by
the parameter $\alpha$, and thus the corresponding orbits are inside the region 
bounded by the homoclinics. This is always the case when $(-\phi, \phi^\star)$
is in this region. When it is outside this region the orbits can still be continued,
as explained in Section~\ref{sec:phaseplane*}, and the results above still hold
using a parametrization of the orbits by either of the parameters
introduced therein, namely $y(-L), z$, or $\beta$.

\medskip

The results above and the discussion in Section~\ref{sec:phaseplane*} allows us to conclude
that the type $I$ solutions branch continues monotonically to $T=0$, as shown in 
Figure~\ref{localbif}.

\subsection{Behavior of type $C$ solutions branch}\label{subsec:typeC}

Consider now solutions of type $C$. 
From \eqref{TimeC}, the definition of the time maps \eqref{timemapT} and \eqref{timemapT1}, 
and Proposition~\ref{proptimemaps}, we conclude that 
\begin{eqnarray*}
T_C(\alpha,\phi) & = & 2T(\alpha) + T_1(\alpha, x^\star) - T_1(\alpha, \phi) \\
& = & T(\alpha) + \underbrace{T_1(\alpha, x^\star)}_{>0} + 
\underbrace{\bigl(T(\alpha) - T_1(\alpha, \phi)\bigr)}_{>0} \\
& > & T(\alpha) \\
& > & T(\phi) = T^* = 2L^*.
\end{eqnarray*}

In the other hand, since by \eqref{TimeI} and \eqref{TimeC} we can write $T_C(\alpha) = 2T(\alpha) -
T_I(\alpha)$, we conclude that
\[
\frac{\partial T_C}{\partial \alpha}(\alpha,\phi) = 2T'(\alpha) - \frac{\partial T_I}{\partial \alpha}(\alpha,\phi)
> 0,
\]
where the positivity comes from Proposition~\ref{proptimemaps} and the result in
Section~\ref{subsec:typeI}. Note that orbits of type $C$ are always inside the 
region bounded by the homoclinics and so this analysis is enough to conclude that, like the 
branch of type $I$ solutions, the type $C$ solutions branch do not have turning points
and, from Proposition~\ref{proptimemaps}.(1), exists globally when $L\to +\infty$, since
type $C$ orbits take progressively longer times as $\alpha\to\pi/2.$.

\subsection{Local behavior of type $B$ solutions branch}\label{subsec:localB}

In this section we study the behavior of the solution branch
of type $B$ solutions locally close to the bifurcation point. 

\medskip

Consider the branch of bifurcating solutions of \eqref{originalsystem}--\eqref{originalbc}
denoted by $B$ in Section~\ref{sec:phaseplane*}. As already observed, the 
time spent by an orbit of type $B$ is given by
\begin{equation}
T_B(\alpha,\phi) = T_1(\alpha,\phi) + T_1(\alpha, x^\star(\alpha)),
\end{equation}
where $T_1$ is the time map defined by \eqref{timemapT1}
and
$x^\star$ is defined by \eqref{xstaralpha}.
Please see the plot of a type $B$ orbit in Figure~\ref{types} in order to clarify this notation.

\medskip

Since type $B$ solutions can be continued above the homoclinic orbit to 
$(-\frac{\pi}{2},0)\equiv(\frac{\pi}{2},0)$ in $\{y>0\}$ it is natural to consider the
orbit parametrized by the ordinate of one of its points. It turns out that, from the computational
point of view, an appropriate parameter is the square of the ordinate $y(-L)$ of the 
initial point of the orbit. We shall denote this parameter by $z$. Using 
$V(-\phi,\sqrt{z})=V(-\alpha,0)$, we can obtain the expression for $T_B$ from \eqref{TimeB}
when the orbit is bounded by the homoclinics and extended to larger values of $z$ as explained 
in Section~\ref{sec:timemaps}. We thus have
\begin{equation}
T_B(z,\phi)= \int_0^{\phi}(z-\cos 2\phi+\cos 2x)^{-1/2}dx + 
\int_0^{\bx(z)}(z-\cos 2\phi+\cos 2x)^{-1/2}dx\label{TBzz}
\end{equation}
where  $z \in [0, 2],$   $\phi \in (0, \frac{\pi}{2})$, and $\bx(z)$ 
is the function $x^\star$ expressed in the new variable $z$, 
defined above in \eqref{xstarz}.

\subsubsection{Computations of $\frac{\partial T_B}{\partial z}$}\label{subsubsec:dTB/dbeta}

We want to prove that for $z$ sufficiently close to zero the time map satisfies $T_B(z,\phi) < T_B(0,\phi)$. 
To achieve this, we prove, in Proposition~\ref{prop3}, that $\frac{\partial T_B}{\partial z}(0) = -\infty$, which obviously implies the inequality.

\begin{prop}\label{prop1}%\mbox{}\newline
	$\frac{\partial T_B}{\partial z}(z, \phi) \longrightarrow -\infty$ as $z \to 0.$
\end{prop}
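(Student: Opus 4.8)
The plan is to differentiate the two integrals in \eqref{TBzz} with respect to $z$ and show that the dominant contribution blows up to $-\infty$ as $z \to 0^+$. Differentiating under the integral sign, the first integral in \eqref{TBzz} contributes
\[
-\frac{1}{2}\int_0^{\phi}(z-\cos 2\phi+\cos 2x)^{-3/2}dx,
\]
which is negative but, for $z$ near $0$, is \emph{bounded}: the integrand has its worst behaviour near $x=0$, where $z-\cos 2\phi+\cos 2x \approx z + (1-\cos 2\phi) - x^2$ stays bounded away from zero since $1-\cos 2\phi > 0$. So this term tends to a finite negative limit and plays no role in producing the divergence. The divergence must therefore come from the second integral, whose upper limit $\bx(z) = \tfrac12\arccos(1-z)$ itself depends on $z$, with $\bx(z)\to 0$ and $\bx'(z)\to +\infty$ as $z\to 0^+$.

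The key step is to apply the Leibniz rule to $\int_0^{\bx(z)}(z-\cos 2\phi+\cos 2x)^{-1/2}dx$. This produces two pieces: an "interior" piece $-\tfrac12\int_0^{\bx(z)}(z-\cos2\phi+\cos2x)^{-3/2}dx$, which again stays bounded as $z\to0$ because the interval of integration shrinks to a point while the integrand near $x=0$ stays near $(z+1-\cos2\phi)^{-3/2}$; and a "boundary" piece
\[
\bx'(z)\,\bigl(z-\cos 2\phi+\cos 2\bx(z)\bigr)^{-1/2}.
\]
Now I would evaluate this boundary term explicitly. By definition of $\bx(z)$ we have $\cos 2\bx(z) = 1-z$, so $z - \cos 2\phi + \cos 2\bx(z) = 1-\cos 2\phi = (\phi^\star)^2 > 0$, a positive constant independent of $z$. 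Hence the boundary piece equals $\bx'(z)/\phi^\star$. Differentiating $\bx(z)=\tfrac12\arccos(1-z)$ gives $\bx'(z) = \tfrac12\,(1-(1-z)^2)^{-1/2} = \tfrac12\,(2z-z^2)^{-1/2}$, which tends to $+\infty$ as $z\to 0^+$ like $\tfrac{1}{2\sqrt{2z}}$.

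This is where I must be careful with the sign: the boundary term $\bx'(z)/\phi^\star \to +\infty$, which on its own would push $\partial T_B/\partial z$ to $+\infty$, the wrong direction. So the subtlety — and what I expect to be the main obstacle — is that the divergence to $-\infty$ cannot come from the naive boundary term alone; one of the "interior" $-3/2$-power integrals must in fact also diverge and do so faster. Re-examining: in the second integral $\int_0^{\bx(z)}(z-\cos2\phi+\cos2x)^{-3/2}dx$, as $z\to0$ the integrand is \emph{not} uniformly bounded if $\cos 2\phi$ can exceed... no, $1-\cos 2\phi>0$ keeps it bounded on $[0,\bx(z)]$. Thus I suspect the correct accounting requires \emph{not} differentiating the split form \eqref{TBzz} termwise but instead using the substitution that removes the $z$-dependence from the limits — e.g. writing the second integral via $x = \bx(z)s$ or via the change of variable $u=\cos 2x$ — so that the true singular structure is exposed. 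Concretely, I would substitute $u = \cos 2x$ in the second integral, turning it into $\tfrac12\int_{1-z}^{1}(z-\cos2\phi+u)^{-1/2}(1-u^2)^{-1/2}du$; here the factor $(1-u^2)^{-1/2}$ has an integrable singularity at $u=1$ that, upon $\partial_z$, interacts with the lower limit $1-z$, and a careful Leibniz-rule computation of this form should reveal the genuine $-\infty$ behaviour. I expect the resolution to be that the interior term from this representation behaves like $-c\,z^{-1/2}$ with $c>0$ large enough to beat the $+\tfrac{1}{2\sqrt2}z^{-1/2}$ from the boundary term, yielding $\partial T_B/\partial z \sim -c'z^{-1/2} \to -\infty$; pinning down the constants and their signs rigorously is the crux of the argument.
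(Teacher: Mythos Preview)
Your proposal contains a decisive error at the very first step: the integral $\int_0^{\phi}(z-\cos 2\phi+\cos 2x)^{-3/2}\,dx$ is \emph{not} bounded as $z\to 0$. You placed the ``worst behaviour'' of the integrand at $x=0$, but since $\cos 2x$ is decreasing on $[0,\phi]\subset[0,\tfrac{\pi}{2})$, the quantity $z-\cos 2\phi+\cos 2x$ attains its minimum at the \emph{upper} endpoint $x=\phi$, where it equals exactly $z$. The integrand therefore blows up like $z^{-3/2}$ there, and a change of variable (the paper sets $zt=\sin(\phi-x)$ and applies dominated convergence) gives
\[
\int_0^{\phi}(z-\cos 2\phi+\cos 2x)^{-3/2}\,dx \;=\; z^{-1/2}\Bigl(\frac{1}{\sin 2\phi}+o(1)\Bigr)\qquad (z\to 0).
\]
This is precisely the divergent negative contribution you went hunting for in the wrong place. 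Your analysis of the Leibniz boundary term is correct --- it contributes $+\tfrac{1}{2}z^{-1/2}(2-z)^{-1/2}(1-\cos 2\phi)^{-1/2}$ --- and so is your observation that the interior piece over $[0,\bx(z)]$ is harmless (it is $O(z^{1/2})$, since the integrand is bounded by $(1-\cos 2\phi)^{-3/2}$ and the interval has length $\sim\sqrt{z/2}$). But your proposed substitution $u=\cos 2x$ in the second integral cannot rescue the argument: after differentiation in $z$ the interior term becomes $-\tfrac{1}{4}\int_{1-z}^{1}(z-\cos 2\phi+u)^{-3/2}(1-u^2)^{-1/2}\,du$, where the first factor is bounded by $(1-\cos 2\phi)^{-3/2}$ and $\int_{1-z}^{1}(1-u^2)^{-1/2}\,du\sim\sqrt{2z}$, so this is again only $O(z^{1/2})$.

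The race between $\pm\, z^{-1/2}$ terms that you correctly anticipated is thus between the \emph{first} integral and the boundary term, and the crux is the inequality
\[
\frac{1}{\sin 2\phi} \;>\; \frac{1}{\sqrt{2}\,(1-\cos 2\phi)^{1/2}},
\]
equivalently $2(1-\cos 2\phi)>\sin^{2}2\phi$, which holds for all $\phi\in(0,\tfrac{\pi}{2})$ since $h(\theta):=2(1-\cos\theta)-\sin^{2}\theta$ satisfies $h(0)=0$ and $h'>0$ on $(0,\pi)$. With the singularity correctly located one obtains
\[
2\,\frac{\partial T_B}{\partial z}(z,\phi)\;=\;-\,z^{-1/2}\Bigl(\frac{1}{\sin 2\phi}-\frac{1}{\sqrt{2}\,(1-\cos 2\phi)^{1/2}}+o(1)\Bigr)\;\longrightarrow\;-\infty.
\]
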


\begin{proof}
	Differentiating \eqref{TBzz} with respect to $z$ we get
	\begin{eqnarray}
	2\frac{\partial T_B}{\partial z}(z,\phi) & = & - \int_0^{\phi}(z-\cos 2\phi+\cos 2x)^{-3/2}dx  \,- \nonumber \\
	& & -\, \int_0^{\bx(z)}(z-\cos 2\phi+\cos 2x)^{-3/2}dx \,+ \nonumber \\
	& & +\, 2(z - \cos 2\phi + \cos 2\bx(z))^{-\frac{1}{2}}\frac{d\bx}{dz}(z)\nonumber \\
	& = &
	- \int_0^{\phi}(z-\cos 2\phi+\cos 2x)^{-3/2}dx  \,- \nonumber \\
	& & -\, \int_0^{\bx(z)}(z-\cos 2\phi+\cos 2x)^{-3/2}dx \,+ \label{dTBdz} \\
	& & +\,
	z^{-\frac{1}{2}}(2-z)^{-\frac{1}{2}}(1 - \cos 2\phi)^{-\frac{1}{2}}.\nonumber
	\end{eqnarray}

\noindent
We now estimate the integral terms in this expression, starting with the second integral.
We first need to look at the behavior of $z\mapsto \bx(z)$:  a simple application of the 
following generalized Taylor expansion 
\begin{equation}
\arccos(1-x) = \sqrt{2x} + \frac{(2x)^{\frac{3}{2}}}{24} + O(x^2)\quad\text{as}\quad x\to 0\label{gte}
\end{equation}
allows us to write 
\begin{equation}
\bx(z) = \frac{1}{\sqrt{2}}z^{\frac{1}{2}} + \frac{\sqrt{2}}{24}z^{\frac{3}{2}} + O(z^2), 
\quad\text{as $z\to 0$.}\label{expansionxstar}
\end{equation}

\begin{lemma}\label{I2} 
	${\displaystyle{\int_0^{\bx(z)}(z-\cos 2\phi+\cos 2x)^{-3/2}dx }}
	= O\bigl(z^{\frac{1}{2}}\bigr)\quad\text{as}\quad z\to 0.$
\end{lemma}

\begin{proof}
Since $1-\cos 2\phi = z-\cos 2\phi + \cos 2 x^\star(z)< z-\cos 2\phi + \cos 2x < z+2,$ and using \eqref{expansionxstar}, we get, as $z\to 0$,
\[
\frac{1}{4}z^{\frac{1}{2}} +  O\bigl(z^{\frac{3}{2}}\bigr) < \int_0^{\bx(z)}(z-\cos 2\phi+\cos 2x)^{-3/2}dx \leqs \frac{1}{\sqrt{2}}(1-\cos 2\phi)^{-\frac{3}{2}}z^{\frac{1}{2}} +  O\bigl(z^{\frac{3}{2}}\bigr),
\]
which proves the lemma.
\end{proof}

\begin{lemma}\label{L6}
	${\displaystyle{\int_0^{\phi}(z-\cos 2\phi+\cos 2x)^{-3/2}dx }}
		= z^{-\frac{1}{2}}\left(\frac{1}{\sin 2 \phi} + o(1)\right) \quad\text{as}\quad z\to 0.$
\end{lemma}

\begin{proof}
	Consider the trigonometric identity 
	\begin{equation}
	\cos a -\cos b = 2\sin\frac{b+a}{2}\sin\frac{b-a}{2}\label{trigo}
	\end{equation}
	and write the integral as 
	\begin{equation}
	\int_0^{\phi}(z+2\sin(\phi+x)\sin(\phi-x))^{-3/2}dx.
	\nonumber 
	\end{equation}	
Considering the change of variable $x\mapsto t$ defined by $zt = \sin(\phi-x)$ we have
$\frac{dt}{dx} = - \frac{1}{z}\sqrt{1-(zt)^2},$ and the integral becomes
\begin{eqnarray}
\lefteqn{z^{-\frac{1}{2}}\int_0^{\frac{\sin\phi}{z}}\frac{1}{\sqrt{1-(zt)^2}\,\Bigl(1+2t\sin\bigl(2\phi - \arcsin(zt)\bigr)\Bigr)^{\frac{3}{2}}}\,dt \;\, =}  \nonumber \\
& = & z^{-\frac{1}{2}}\int_0^{+\infty}\underbrace{\frac{\text{\Large $\mathbf{1}$}_{\bigl(0,\frac{\sin\phi}{z}\bigr)}(t)}{\sqrt{1-(zt)^2}\,\Bigl(1+2t\sin\bigl(2\phi - \arcsin(zt)\bigr)\Bigr)^{\frac{3}{2}}}}_{=: f(z,t)}\,dt \label{integralz}
\end{eqnarray}
Observe now that
\begin{equation}
\lim_{z\to 0}f(z,t) = \frac{1}{\Bigl(1+2t\sin 2\phi\Bigr)^{\frac{3}{2}}} =: f(t),\quad\text{pointwise in $t$,}\nonumber
\end{equation}
and note also that, since $0\leqs zt \leqs \sin\phi$ and $0 < \phi < \frac{\pi}{2}$,
we conclude that $\sin\bigl(2\phi - \arcsin(zt)\bigr)$ is positive and bounded
away from zero satisfying\footnote{Recall the notation $a\wedge b = \min\{a, b\},$} 
$\sin\bigl(2\phi - \arcsin(zt)\bigr) > \sin 2\phi \wedge \sin \phi
=: m(\phi) > 0.$
Also $\sqrt{1- (zt)^2} \geqs \sqrt{1-\sin^2\phi} = \cos\phi >0.$ From these it follows that
\begin{eqnarray}
0\; < \;f(z,t) & \leqs & \frac{1}{\cos\phi}\frac{\text{\Large $\mathbf{1}$}_{\bigl(0,\frac{\sin\phi}{z}\bigr)}(t)}
{\Bigl(1+2tm(\phi)\Bigr)^{\frac{3}{2}}} \nonumber \\
& \leqs & \frac{1}{\cos\phi}\frac{1}{\Bigl(1+2tm(\phi)\Bigr)^{\frac{3}{2}}}\; =: \; g(t)\nonumber 
\end{eqnarray}
and $g$ is integrable in $[0, +\infty).$ Hence, by the 
Lebesgue's dominated convergence theorem applied to the integral in
\eqref{integralz} we conclude that, as $z\to 0,$
\[
\int_0^{+\infty}f(z,t)dt  \;=\; \int_0^{+\infty}\frac{1}{\Bigl(1+2t\sin 2\phi\Bigr)^{\frac{3}{2}}}dt + o(1) \;=\; \frac{1}{\sin 2\phi} + o(1),
\]
and this concludes the proof of the lemma.
\end{proof}

Collecting the results in the above lemmas we can write, as $z \to 0,$
\begin{eqnarray}
2\frac{\partial T_B}{\partial z}(z,\phi) 
& = &
-z^{-\frac{1}{2}}\left(\frac{1}{\sin 2\phi} + o(1)\right) 
+ z^{-\frac{1}{2}}\frac{1}{\sqrt{2}}(1-\cos 2\phi)^{-\frac{1}{2}} + O\bigl(z^{\frac{1}{2}}\bigr) \nonumber \\
& = & - z^{-\frac{1}{2}}\left(\frac{1}{\sin 2\phi} - \frac{1}{\sqrt{2}(1-\cos 2\phi)^{\frac{1}{2}}} + o(1)\right) \nonumber \\
& = & 
- z^{-\frac{1}{2}}\left(\frac{\sqrt{2}(1-\cos 2\phi)^{\frac{1}{2}}-\sin 2\phi}{\sqrt{2}\sin 2\phi(1-\cos 2\phi)^{\frac{1}{2}}} + o(1)\right). \label{finalestimate}
\end{eqnarray}
Consider the function defined in the interval $[0,\pi)$ by $h(x) = 2(1-\cos x) - \sin^2x$.
Clearly $h(0)=0$ and $h'(x) >0$ for $x\in(0,\pi)$. Thus $h(x) >0$ if $x\in (0, \pi)$.
This implies that $2(1-\cos 2\phi)>\sin^22\phi$ for all $\phi\in (0, \frac{\pi}{2})$
and the monotonicity of $\sqrt{\cdot}$ entails $\sqrt{2}(1-\cos 2\phi)^{\frac{1}{2}}>\sin2\phi.$
Using this in \eqref{finalestimate} completes the proof of Proposition~\ref{prop1}.
\end{proof}

\medskip

The next proposition is an elementary result in Real Analysis that we state and prove for completeness

\begin{prop}\label{prop2}%\mbox{}\newline
	Let $\psi: [0, c)\to \Rb$ be a function continuous in $[0,c)$, differentiable in $(0,c)$,
	and assume that $\psi'(x)\to -\infty$ as $x\to 0$. Then, the following holds true:
	 $\psi'(0):= {\displaystyle \lim_{\xi\to 0^+}\frac{\psi(\xi)-\psi(0)}{\xi} = -\infty}.$
\end{prop}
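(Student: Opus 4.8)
The plan is to prove Proposition~\ref{prop2} directly from the Mean Value Theorem. The statement is essentially a one-sided l'Hôpital-type assertion: if the derivative blows up to $-\infty$ near $0$, then the difference quotient measuring the one-sided derivative at $0$ also tends to $-\infty$.

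First I would fix an arbitrary $M>0$ and use the hypothesis $\psi'(x)\to-\infty$ to produce a $\delta\in(0,c)$ such that $\psi'(x)<-M$ for all $x\in(0,\delta)$. Then, for any $\xi\in(0,\delta)$, I would apply the Mean Value Theorem to $\psi$ on the interval $[0,\xi]$ (legitimate since $\psi$ is continuous on $[0,c)\supseteq[0,\xi]$ and differentiable on $(0,c)\supseteq(0,\xi)$): there exists $\eta=\eta(\xi)\in(0,\xi)$ with
\[
\frac{\psi(\xi)-\psi(0)}{\xi}=\psi'(\eta).
\]
Since $\eta\in(0,\xi)\subseteq(0,\delta)$, we have $\psi'(\eta)<-M$, hence the difference quotient is $<-M$ for every $\xi\in(0,\delta)$. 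As $M>0$ was arbitrary, this is exactly the statement that $\lim_{\xi\to0^+}\bigl(\psi(\xi)-\psi(0)\bigr)/\xi=-\infty$, i.e. $\psi'(0)=-\infty$ in the stated sense.

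There is essentially no obstacle here; the only point requiring a word of care is that the Mean Value Theorem applies on each closed subinterval $[0,\xi]$ despite differentiability being assumed only on the open interval $(0,c)$ — but this is fine, since the MVE only needs continuity on the closed interval and differentiability on its interior, both of which hold. I would write the argument in two or three sentences and append the $\square$. (I note that the proposition as used is applied with $\psi(z)=T_B(z,\phi)$ and $c=2$, combining with Proposition~\ref{prop1} to deduce the claimed $\frac{\partial T_B}{\partial z}(0)=-\infty$; the present proof is the generic real-analysis lemma behind that deduction.)
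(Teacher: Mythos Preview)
Your proof is correct and follows essentially the same approach as the paper: apply the Mean Value Theorem on $[0,\xi]$ to write the difference quotient as $\psi'(\eta)$ for some $\eta\in(0,\xi)$, and then let $\xi\to 0^+$. Your version is slightly more explicit in the $\varepsilon$--$\delta$ bookkeeping, but the argument is the same.
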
	

\begin{proof}
	Fix $\xi\in(0,c)$ and apply the mean value theorem to the interval $[0,\xi]$. We conclude that
	there exists $x \in (0, \xi)$ such that
	\[
	\frac{\psi(\xi)-\psi(0)}{\xi} = \psi'(x).
	\]
	Passing to the limit as $\xi\to 0$ in both sides of this expression, 
	and using the assumption
	about $\psi'(x)$ when $x$ approaches $0$ in the right-hand side, 
	we conclude the proof.
\end{proof}

\medskip

We can now apply Propositions~\ref{prop1} and \ref{prop2} to immediately conclude that

\begin{prop}\label{prop3}%\mbox{}\newline
	 $\frac{\partial T_B}{\partial z}(0,\phi) = -\infty.$
\end{prop}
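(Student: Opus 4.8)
The plan is to combine Propositions~\ref{prop1} and \ref{prop2} in the obvious way, the only mild subtlety being that Proposition~\ref{prop2} is stated for a function $\psi$ on a one-sided interval $[0,c)$ with $\psi'(x)\to-\infty$ as $x\to 0^+$, so I first need to check that the map $z\mapsto T_B(z,\phi)$ fits this template. I would fix $\phi\in(0,\tfrac{\pi}{2})$ and set $\psi(z):=T_B(z,\phi)$ for $z\in[0,c)$ with any $c\in(0,2]$. Continuity of $\psi$ on $[0,c)$ and differentiability on $(0,c)$ follow from the explicit integral representation \eqref{TBzz}: for $z>0$ the integrand $(z-\cos 2\phi+\cos 2x)^{-1/2}$ is smooth in $z$ on each compact $x$-interval (the quantity under the root is bounded below by $1-\cos 2\phi>0$ on the relevant range, as noted in the proof of Lemma~\ref{I2}), the upper limit $\bx(z)$ is smooth by \eqref{xstarz}, and at $z=0$ the integrals still converge, giving continuity there as well; differentiation under the integral sign is justified by the same lower bound on the radicand, which is exactly the computation carried out to obtain \eqref{dTBdz}.

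With this in hand, the core input is Proposition~\ref{prop1}, which states precisely that $\psi'(z)=\frac{\partial T_B}{\partial z}(z,\phi)\to-\infty$ as $z\to 0$. So $\psi$ satisfies all the hypotheses of Proposition~\ref{prop2}, and applying it gives $\psi'(0)=\lim_{\xi\to 0^+}\frac{\psi(\xi)-\psi(0)}{\xi}=-\infty$, which is exactly the assertion $\frac{\partial T_B}{\partial z}(0,\phi)=-\infty$. I would present this as a two-line deduction rather than reproving anything, since both ingredients are already established above.

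There is really no main obstacle here: the substantive work was the asymptotic analysis in Proposition~\ref{prop1} (in particular the dominated-convergence argument of Lemma~\ref{L6} and the comparison argument of Lemma~\ref{I2}), and the only thing left is the bookkeeping that the one-sided derivative at the endpoint inherits the blow-up of the interior derivative. If I wanted to be maximally economical I could even fold the statement of Proposition~\ref{prop3} into a remark immediately following Proposition~\ref{prop1}, but stating it separately is cleaner for the later use in Section~\ref{subsubsec:dTB/dbeta}, where the conclusion $T_B(z,\phi)<T_B(0,\phi)$ for small $z>0$ is drawn. The proof I would write is therefore simply: \emph{Combine Propositions~\ref{prop1} and \ref{prop2}, with $\psi(z)=T_B(z,\phi)$, after noting that $z\mapsto T_B(z,\phi)$ is continuous on $[0,c)$ and differentiable on $(0,c)$ by the integral representation \eqref{TBzz}.}
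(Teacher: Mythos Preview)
Your proposal is correct and matches the paper's own proof, which simply says to apply Propositions~\ref{prop1} and \ref{prop2} immediately. Your additional remarks verifying the continuity and differentiability hypotheses of Proposition~\ref{prop2} for $\psi(z)=T_B(z,\phi)$ are a welcome bit of bookkeeping that the paper leaves implicit.
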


The results of Propositions~\ref{prop1} and \ref{prop3} imply that the time taken by a
solution of type $B$ close to the bifurcation point is smaller than the time $T^*$ taken
by the critical solution $\gamma^\star$.

\medskip

Actually, for the study of type $B$ solutions it would have been enough to prove that
the derivative $\frac{\partial T_B}{\partial z}(z,\phi)$ is negative for
all $z\geqs 0$ sufficiently small. The fact that it is not just a negative real
number is needed for the study of solution branches that correspond to
solutions circling the origin $k$ times, which will be presented in 
section~\ref{sec:otherbifurcations}. We will see that there are solutions analogous to 
those of type $B$ circling the origin $k$ times and taking a time given by 
\[
T_{B_k}(z,\phi)  =  4kT(z) + T_{B}(z,\phi). 
\]
Since $T'(0)\in\Rb^+$, the fact that $\frac{\partial T_B}{\partial z}(0,\phi)<0$
is smaller than any negative real number is what justifies that the bifurcation
diagrams for the ``$k$ branches'' close to their bifurcation points are
qualitatively similar to the case we are presently studying (cf. discussion in section~\ref{sec:otherbifurcations}; see also Figure~\ref{fbifurcations}).

\subsection{On the global behavior of type $B$ solutions branch}\label{subsec:globalB}

The result obtained in the previous section for the time taken by a type $B$ solution is of
a local character: it is valid when the type $B$ orbit is close to the critical one $\gamma^*,$ 
i.e., when the value of the parameter indexing the orbit (be it $\alpha$, $\beta$, or $y(-L)$) is
sufficiently close to the value of the corresponding one in the critical orbit
($\phi$, $\phi^\star$, or $0$, resp.).

\medskip

From the study presented in Section~\ref{sec:phaseplane*} we concluded that the type $B$ branch
of solutions can be continued away from the neighborhood of the critical orbit, and solutions
in this branch, parametrized by the value of $y(-L)$ only cease to exist when the parameter
value is $y(-L)=\sqrt{2}.$ To understand the global behavior of this branch for 
$y(-L)\in (0, \sqrt{2})$  we need to know the behavior of $y(-L)\mapsto T_B(y(-L))$. 
In particular, if we prove that this function is convex, we conclude that the branch of
type $B$ solutions has a unique saddle-node point in the bifurcation diagram.
 
 \medskip
 
As in Section~\ref{subsec:localB}, let us parameterize type $B$ orbits by $z=y(-L)^2$. 

\medskip
 
Differentiating \eqref{dTBdz} with respect to $z$ we get, after some algebraic manipulations,
\begin{eqnarray}
4\frac{\partial^2 T_B}{\partial z^2}(z,\phi) & = &
3 \int_0^{\phi}(z-\cos 2\phi+\cos 2x)^{-5/2}dx  \,+ \nonumber \\
& & +\, 3 \int_0^{\bx(z)}(z-\cos 2\phi+\cos 2x)^{-5/2}dx \,+ \label{d2TBdz2} \\
& & +\,
z^{-\frac{3}{2}}(2-z)^{-\frac{3}{2}}(1 - \cos 2\phi)^{-\frac{3}{2}}g(z,\phi),\nonumber 
\end{eqnarray}
where
\begin{equation}
g(z,\phi) := z^2 - (2\cos 2\phi)z -2(1-\cos 2\phi).\label{g}
\end{equation}

When $g$ is negative, the sign of $\frac{\partial^2 T_B}{\partial z^2}(z,\phi)$ 
depends on the balance between the two positive integrals and the 
(negative) last term in \eqref{d2TBdz2}, and its determination seems to
be a challenging problem.
However, close to the border $z=0$ we can compute the sign of $\frac{\partial^2 T_B}{\partial z^2}$ 
using the asymptotic technique employed in the proof of Lemma~\ref{L6}:
\begin{lemma}\label{L8} $\frac{\partial^2 T_B}{\partial z^2}(z,\phi) > 0$ as $z\to 0.$
\end{lemma}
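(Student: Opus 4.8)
The plan is to establish the claimed positivity of $\frac{\partial^2 T_B}{\partial z^2}(z,\phi)$ as $z\to 0$ by an asymptotic analysis of the three terms on the right-hand side of \eqref{d2TBdz2}, exactly parallel to what was done for $\frac{\partial T_B}{\partial z}$ in the proof of Proposition~\ref{prop1}. First I would deal with the last term: since $g(z,\phi)\to g(0,\phi) = -2(1-\cos 2\phi) < 0$ as $z\to 0$, and the prefactor $z^{-3/2}(2-z)^{-3/2}(1-\cos 2\phi)^{-3/2}$ behaves like $z^{-3/2}\cdot 2^{-3/2}(1-\cos 2\phi)^{-3/2}$, this term contributes
\[
-\,z^{-3/2}\,\frac{2(1-\cos 2\phi)}{2^{3/2}(1-\cos 2\phi)^{3/2}}\bigl(1+o(1)\bigr) \;=\; -\,z^{-3/2}\,\frac{1}{\sqrt{2}\,(1-\cos 2\phi)^{1/2}}\bigl(1+o(1)\bigr),
\]
so it is a negative term of order $z^{-3/2}$.

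Next I would estimate the two integrals. For the second integral, $\int_0^{\bx(z)}(z-\cos 2\phi+\cos 2x)^{-5/2}dx$, the same sandwiching used in Lemma~\ref{I2} applies: on the interval of integration one has $1-\cos 2\phi \leqs z-\cos 2\phi+\cos 2x \leqs z+2$, so the integrand is bounded above and below by constants (depending on $\phi$) times $1$, and since the length of the interval is $\bx(z) = \frac{1}{\sqrt2}z^{1/2}+O(z^{3/2})$ by \eqref{expansionxstar}, this integral is $O(z^{1/2})$, hence negligible compared to $z^{-3/2}$. For the first integral, $\int_0^{\phi}(z-\cos 2\phi+\cos 2x)^{-5/2}dx$, I would repeat the argument of Lemma~\ref{L6}: use the trigonometric identity \eqref{trigo} to write the integrand's denominator as $(z+2\sin(\phi+x)\sin(\phi-x))^{5/2}$, perform the substitution $zt=\sin(\phi-x)$ with $\frac{dt}{dx} = -\frac1z\sqrt{1-(zt)^2}$, and extract the power of $z$. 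The Jacobian contributes $z^{-1}$, the factor $(z+2\sin(\phi+x)\sin(\phi-x))^{-5/2}$ contributes $z^{-5/2}\cdot(1+2t\sin(2\phi-\arcsin(zt)))^{-5/2}$, so pulling out $z^{-1/2}\cdot z^{-5/2}\cdot z = z^{-3/2}$ leaves a bounded integral $\int_0^{+\infty} f(z,t)\,dt$ converging by dominated convergence (the dominating function now being $\frac{1}{\cos\phi}(1+2tm(\phi))^{-5/2}$, still integrable) to $\int_0^{+\infty}(1+2t\sin 2\phi)^{-5/2}dt = \frac{1}{3\sin 2\phi}$. Thus the first integral equals $z^{-3/2}\bigl(\frac{1}{3\sin 2\phi}+o(1)\bigr)$.

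Collecting the three contributions and factoring out $z^{-3/2}$, \eqref{d2TBdz2} gives
\[
4\frac{\partial^2 T_B}{\partial z^2}(z,\phi) \;=\; z^{-3/2}\left(\frac{3}{3\sin 2\phi} - \frac{1}{\sqrt2\,(1-\cos 2\phi)^{1/2}} + o(1)\right) \;=\; z^{-3/2}\left(\frac{1}{\sin 2\phi} - \frac{1}{\sqrt2\,(1-\cos 2\phi)^{1/2}} + o(1)\right).
\]
Now I would invoke exactly the inequality already established at the end of the proof of Proposition~\ref{prop1}, namely $\sqrt2\,(1-\cos 2\phi)^{1/2} > \sin 2\phi$ for all $\phi\in(0,\frac{\pi}{2})$ (via the auxiliary function $h(x)=2(1-\cos x)-\sin^2 x$), which is equivalent to $\frac{1}{\sin 2\phi} > \frac{1}{\sqrt2\,(1-\cos 2\phi)^{1/2}}$. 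Hence the bracketed quantity is positive for $z$ small, and since $z^{-3/2}>0$, we get $\frac{\partial^2 T_B}{\partial z^2}(z,\phi)>0$ as $z\to 0$. The main obstacle — and the only place requiring genuine care rather than bookkeeping — is verifying that the dominated-convergence argument still goes through with the exponent $5/2$ in place of $3/2$; but since raising the integrand to a higher power only makes the tail decay faster, the dominating function $g(t)$ from Lemma~\ref{L6} (suitably re-exponentiated) still works, so there is no real difficulty. It is worth noting that the coefficient of $z^{-3/2}$ in $\frac{\partial^2 T_B}{\partial z^2}$ turns out to be exactly $\frac14$ times the coefficient of $z^{-1/2}$ in $-\frac{\partial T_B}{\partial z}$ (compare \eqref{finalestimate}); this is the structural reason the same sign condition reappears, and I would point this out to streamline the exposition.
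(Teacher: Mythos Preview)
Your proof is correct and follows essentially the same approach as the paper's: the same change of variable $zt=\sin(\phi-x)$ and dominated convergence argument for the first integral, the same asymptotic handling of the $g$-term, and the same (equivalent) elementary inequality for the sign of the leading coefficient. The only cosmetic difference is that the paper simply drops the second (positive) integral to obtain a lower bound rather than estimating it as $O(z^{1/2})$, and rewrites the final bracket as $(1-\cos 2\phi)^{-1/2}\bigl((1+\cos 2\phi)^{-1/2}-2^{-1/2}\bigr)$ via $\sin 2\phi=(1-\cos 2\phi)^{1/2}(1+\cos 2\phi)^{1/2}$, which is the same inequality you invoke from Proposition~\ref{prop1}.
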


\begin{proof}
	From \eqref{d2TBdz2} we have
	\begin{equation}
	4\frac{\partial^2 T_B}{\partial z^2}(z,\phi) \geqs
	3 \int_0^{\phi}(z-\cos 2\phi+\cos 2x)^{-5/2}dx  +
	z^{-\frac{3}{2}}(2-z)^{-\frac{3}{2}}(1 - \cos 2\phi)^{-\frac{3}{2}}g(z,\phi).\label{bound1}
	\end{equation}
	Using the trigonometric identity \eqref{trigo} and the
	change of variable $x\mapsto t,$ with  
	$zt=\sin(\phi-x),$ in the integral in \eqref{bound1}, we can write
	\[
	\int_0^{\phi}(z-\cos 2\phi+\cos 2x)^{-\frac{5}{2}}dx = 
	z^{-\frac{3}{2}}\!\!\int_0^{+\infty}\!\!\!\!\!\!\!\!
	\frac{\text{\Large $\mathbf{1}$}_{\bigl(0,\frac{\sin\phi}{z}\bigr)}(t)}{\sqrt{1- (zt)^2}\,\Bigl(1+2t\sin\bigl(2\phi - \arcsin(zt)\bigr)\Bigr)^{\frac{5}{2}}}dt.
	\]
	Observing that the integral in right-hand side is like \eqref{integralz} with $-1/2$ changed to $-3/2$ and $3/2$ to $5/2$, we can apply the argument in the proof of Lemma~\ref{L6} to obtain, as $z\to 0,$
	\begin{equation}
	\int_0^{\phi}(z-\cos 2\phi+\cos 2x)^{-\frac{5}{2}}dx = z^{-\frac{3}{2}}\!\int_0^{+\infty}\!\!\!\frac{1}{(1+2t\sin 2\phi)^{\frac{5}{2}}}\,dt \,+ o(1) = \frac{1}{3\sin 2\phi} + o(1).\label{limitintegral1}
	\end{equation} 
	Now, using the definition of $g$, \eqref{g}, to write  $g(z,\phi) = -2(1-\cos 2\phi) + O(z)$ as 
	$z\to 0$, observing that $\sin 2\phi = (1-\cos^2 2\phi)^{\frac{1}{2}} =
	(1-\cos 2\phi)^{\frac{1}{2}}(1+\cos 2\phi)^{\frac{1}{2}},$ and 
	substituting \eqref{limitintegral1} into \eqref{bound1}, we obtain the following, as $z\to 0$,
	\begin{equation}
	4\frac{\partial^2 T_B}{\partial z^2}(z,\phi) \geqs 
	z^{-\frac{3}{2}}\frac{1}{(1-\cos 2\phi)^{\frac{1}{2}}}\left(\frac{1}{(1+\cos 2\phi)^{\frac{1}{2}}}
			- \frac{1}{\sqrt{2}} + o(1)\right) > 0,\label{limitintegral2}
	\end{equation}
	where the positivity is due to	$1+\cos 2\phi \in (0, 2),$ and hence $(1+\cos 2\phi)^{-\frac{1}{2}}>\frac{1}{\sqrt{2}}.$
\end{proof}  

\medskip

Since the direct handling of \eqref{d2TBdz2} does not seem promising, we tried to
approach the problem of the convexity of $T_B(z,\phi)$
by the method presented in Smoller \cite[Chap. 13\S D]{smoller}: to 
establish the convexity
of $T_B(z,\phi)$ it is sufficient to prove that,
if $(z,\phi)\in (0,2)\times(0,\frac{\pi}{2})$ is a stationary point 
of $z\mapsto T_B(z,\phi)$, we have 
\begin{equation}
\Phi(z,\phi):= 4\frac{\partial^2 T_B}{\partial z^2} + 
2k(z,\phi)\frac{\partial T_B}{\partial z} >0, \label{Phi>0}
\end{equation}
for all functions $k(z,\phi)$, because at these points the value of the first derivative $\frac{\partial T_B}{\partial z}$ 
is zero, by definition. Thus, if we find a function $k(z,\phi)$ such that \eqref{Phi>0} holds for all points
$(z,\phi)$, we conclude that, for each fixed $\phi$, $\Phi(z,\phi)$ is convex at each of its stationary points,
and thus there can exist only one stationary point.
 
 \medskip

Now, by \eqref{dTBdz}, \eqref{d2TBdz2}, and \eqref{g}, choosing 
\begin{align}
k(z,\phi) &:= - z^{-1}(2-z)^{-1}(1-\cos 2\phi)^{-1}g(z,\phi) \\
& = (1-\cos 2\phi)^{-1} - 2\,\frac{z-1}{z(2-z)}, \label{k}
\end{align}
we get
\begin{equation}
\Phi(z,\phi) = \Bigl(\int_0^\phi + \int_0^{\bx(z)}\Bigr)h^{-5/2}(3-kh)\,dx, \label{2int}
\end{equation}
where
\begin{equation}
h = h(z,\phi,x):= z-\cos 2\phi +\cos 2x. \label{h}
\end{equation}

From the definition of $k$ it follows that $k <0$ whenever $g>0$ and, 
from \eqref{2int}, \eqref{h}, and $h(z,\phi, x) < h(z,\phi,0),$ we easily conclude that
 $\Phi(z,\phi) > 0$ in
	$\Omega:= \left\{(z,\phi)\in \Rb^2 \,|\, 3-k(z,\phi)h(z,\phi,0) > 0\right\}.$ 
This set $\Omega$ is illustrated in Figure~\ref{figomega}.

%%%%%%%%%%%%%%%%%%%%%%%%%%%%%%%%%%%%%%%%%%%%%%%%%%%%%%%%%%%%%%%%%%%%%%%%%%%%%%%%%%%%%%%%%%
%
%
%
\begin{figure}[h]
	\psfragfig*[scale=0.45]{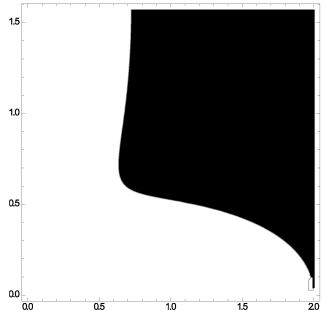}{%
	\psfrag{z}{$z$}
	\psfrag{phi}{$\phi$}
}
	\caption{Illustration  (in black) of the set $\Omega$ of points $(z,\phi)$ where 
		$3-k(z,\phi)h(z,\phi,0) > 0$.}\label{figomega}
\end{figure}
%
%
%
%%%%%%%%%%%%%%%%%%%%%%%%%%%%%%%%%%%%%%%%%%%%%%%%%%%%%%%%%%%%%%%%%%%%%%%%%%%%%%%%%%%%%%%%%%

Outside $\Omega$ the sign of $\Phi$ is much harder to establish since the two
integrals can have opposite signs and be divergent in the boundaries of the domain of $\Phi$.
Numerical computations using the software 
Mathematica$\mbox{}^\copyright$ provide very convincing evidence for the positivity of $\Phi(z,\phi)$ everywhere in the rectangle $(0,2)\times (0, \frac{\pi}{2})$, as illustrated in Figure~\ref{figPhi}.

%%%%%%%%%%%%%%%%%%%%%%%%%%%%%%%%%%%%%%%%%%%%%%%%%%%%%%%%%%%%%%%%%%%%%%%%%%%%%%%%%%%%%%%%%%
%
%
%
\begin{figure}[h]
	\psfragfig*[scale=0.40]{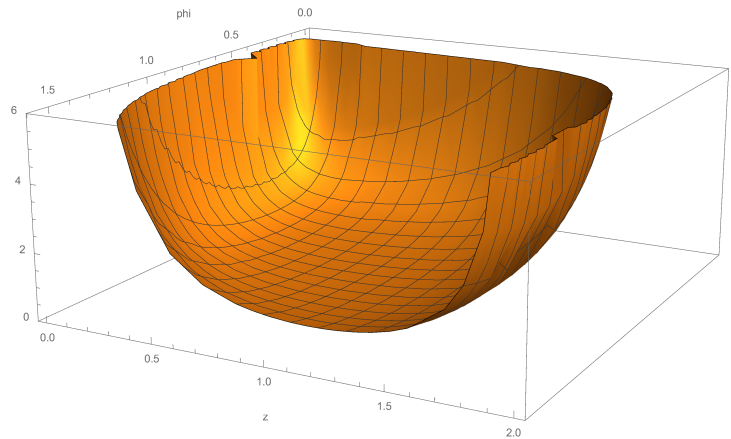}{%
	\psfrag{z}{$z$}
	\psfrag{phi}{$\phi$}
}
	\caption{Graph of  $(z,\phi)\mapsto \Phi(z,\phi)$}\label{figPhi}
\end{figure}
%
%
%
%%%%%%%%%%%%%%%%%%%%%%%%%%%%%%%%%%%%%%%%%%%%%%%%%%%%%%%%%%%%%%%%%%%%%%%%%%%%%%%%%%%%%%%%%%

Unfortunately, despite repeated efforts, we were unable to rigorously establish the positivity of $\Phi$ illustrated in Figure~\ref{figPhi}, and hence the convexity of $z\mapsto T_B(z,\phi)$.
Alternative approaches to prove the existence of a single minimum of the graph of
$z\mapsto T_B(z,\phi)$,  based on attempting to define different type of time maps 
via changes of variables \cite{korman,schaaf} or other analytic approaches \cite{korman}
where fruitless.

\medskip

Thus, we state the following

\begin{conj}\label{C1}
	For each $\phi\in (0, \frac{\pi}{2}),$ the function $z\mapsto T_B(z,\phi)$ is convex.
\end{conj}

For the remainder of this paper we assume this conjecture to hold.

\subsection{Behavior of type $A$ solutions branch}\label{subsec:typeA}

From \eqref{TimeA} we know that solutions of type $A$ satisfy 
\(
T_A(\alpha) = 2T(\alpha) - T_B(\alpha).
\) 
By the results of section~\ref{subsec:localB}, we know that $T_B$ is decreasing 
when $\alpha>\phi$ close to $\phi$, and, by Proposition~\ref{proptimemaps}, $T$ is always increasing.
Thus, we conclude that the time $T_A$ taken by
solutions of type $A$ close to the bifurcation 
point is larger than the time $T^*$ taken by the critical solution $\gamma^\star$.

\medskip

By putting together all previous (analytical and numerical) results we 
conclude that the bifurcation diagram of \eqref{originalsystem}--\eqref{originalbc}
about $T^*$ is the one shown in Figure~\ref{localbif}.
%%%%%%%%%%%%%%%%%%%%%%%%%%%%%%%%%%%%%%%%%%%%%%%%%%%%%%%%%%%%%%%%%%%%%%%%%%%%%%%%%%%%%%%%%%
%
%
%
\begin{figure}[h]
	\psfragfig*[scale=0.27]{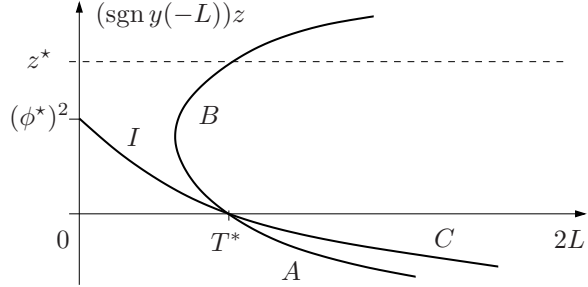}{%
	\psfrag{0}{$0$}
	\psfrag{L}{$2L$}
	\psfrag{y}{$(\sgn y(-L))z$}
	\psfrag{star}{$(\phi^\star)^2$}
	\psfrag{hp}{$z^\star$}
	\psfrag{A}{$A$}
	\psfrag{B}{$B$}
	\psfrag{C}{$C$}
	\psfrag{I}{$I$}
	\psfrag{T0}{$T^*$}
}
	\caption{Local bifurcation around $T^*:=T(\phi)$, and global behaviour of
		the bifurcating branches with $T<T^*$ assuming the behaviour of 
		$B$ orbits stated in Conjecture~\ref{C1}.  
		The plotted case corresponds to $\phi\in \bigl(0,\tfrac{\pi}{4}\bigr).$ 
		When $\phi\in \bigl(\tfrac{\pi}{4}, \tfrac{\pi}{2}\bigr)$ the
		end point of the branch $I$ occurs above the homoclinic orbit, and thus
		$(\phi^\star)^2 > z^\star := 1+\cos 2\phi$.} \label{localbif}
\end{figure}
%
%
%
%%%%%%%%%%%%%%%%%%%%%%%%%%%%%%%%%%%%%%%%%%%%%%%%%%%%%%%%%%%%%%%%%%%%%%%%%%%%%%%%%%%%%%%%%%

\section{Other bifurcations}\label{sec:otherbifurcations}

As was the case of system \eqref{originalsystem} with non-homogeneous Dirichlet
boundary conditions studied in \cite{cggp,cmp}, in the present system  
\eqref{originalsystem}--\eqref{originalbc} with large values of $L$ we can have 
solutions turning several times around the origin in the region of the
cylindrical phase space bounded by the two homoclinic orbits.

\medskip
	
To study these cases we use the same principle of perturbing \emph{critical}
orbits $\gamma^*_k$ defined as $\gamma*$ (i.e., satisfying, in addition to \eqref{originalbc}, 
homogeneous boundary conditions $y(-L)=0$ and $x(L)=0$) but now turning $k$ times around the 
origin. We have also four distinct types of solutions that we can denote by $I_k$, $A_k$,
$B_k$ and $C_k$, analogous to $I, A, B$ and $C$, which can be considered the cases with $k=0$
(i.e., orbits that do not have any complete turn around the origin). The
time spent by these orbits with $z$ sufficiently close to zero is obtained by adding $4kT(\alpha)$ to the times spent
by the corresponding $k=0$ orbits, e.g. 
\begin{align}
T_{I_k}(z,\phi) & =  4kT(z) + T_{I_0}(z,\phi), \label{TimeIk}  
\end{align}
and likewise for the other types of orbits. 

\medskip

To build a global picture of the $I_k$ solution branch as $z$ increases
away from $z=0$ we
need to start by recalling what happens with the $I$ branch (i.e.: with
the case $k=0$). This was studied in sections \ref{sec:phaseplane*} and 
\ref{subsec:typeI}, and illustrated in figures~\ref{figtypeIend}
and \ref{localbif}: the $I$ branch of solutions collapses to a single point and disappears when $z=\sqrt{\phi^\star}$. 

\medskip

In the case of $I_k$ with $k\geqs 1$ an entirely different behaviour 
takes place: the $I_k$ branch of solutions exists for all $z<z^\star := 1+\cos 2\phi$,
where $z^\star$ is the value of $z$ of the point on the homoclinic orbit with
$x= -\phi$, and $T_{I_k}(z)\to +\infty$ when $z\to z^\star.$ In fact we have the two
situations we now describe: 

\medskip

Suppose $\phi\in\bigl(\frac{\pi}{4}, \frac{\pi}{2}\bigr)$, then 
$\cos 2\phi <0$ and thus $z^\star = 1+\cos 2\phi < 1-\cos 2\phi = (\phi^\star)^2$
which implies that the point where the branch of solutions of type $I$
collapse, $P^\star = (-\phi,\phi^\star)$ in Figure~\ref{figtypeIend}, 
is above the point $(-\phi,z^\star)$ at the homoclinic. This implies that,
as $z\uparrow z^\star$, the orbits of type $I_k$ converge to the 
homoclinic net constituted by the point 
$\bigl(-\frac{\pi}{2},0\bigr)\equiv \bigl(\frac{\pi}{2},0\bigr)$
and the two homoclinic orbits, as illustrated in Figure~\ref{homoclinicnet}.
The time spent will, obviously, converge to $+\infty$.

%%%%%%%%%%%%%%%%%%%%%%%%%%%%%%%%%%%%%%%%%%%%%%%%%%%%%%%%%%%%%%%%%%%%%%%%%%%%%%%%%%%%%%%%%%
%
%
%
\begin{figure}[h]
	\psfragfig*[scale=0.60]{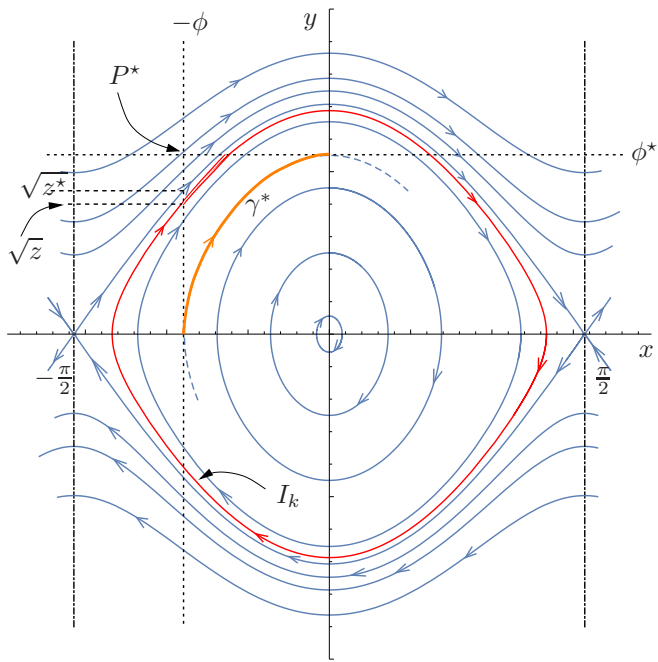}{%
	\psfrag{P}{$P^\star$}
	\psfrag{x}{$x$}	
	\psfrag{y}{$y$}
	\psfrag{z}{$\sqrt{z}$}	
	\psfrag{z*}{$\sqrt{z^\star}$}
	\psfrag{fi}{$-\phi$}
	\psfrag{I}{$I_k$}
	\psfrag{g}{$\gamma^*$}
	\psfrag{fi*}{$\phi^\star$}
	\psfrag{pi/2}{$\frac{\pi}{2}$}
	\psfrag{-pi/2}{$-\frac{\pi}{2}$}
}
	\caption{An orbit of type $I_k$, with $k=1$, and with $z$ close to $z^\star$.
		The case shown corresponds to $\phi\in\bigl(\frac{\pi}{4}, \frac{\pi}{2}\bigr)$, 
		in which case $P^\star$ is outside the homoclinic net.}\label{homoclinicnet}
\end{figure}
%
%
%
%%%%%%%%%%%%%%%%%%%%%%%%%%%%%%%%%%%%%%%%%%%%%%%%%%%%%%%%%%%%%%%%%%%%%%%%%%%%%%%%%%%%%%%%%%

Assume now $\phi\in\bigl(0,\frac{\pi}{4}\bigr)$. Then 
$\cos 2\phi >0$ and thus $z^\star = 1+\cos 2\phi > 1-\cos 2\phi = (\phi^\star)^2$
which implies that the point $P^\star$ 
is now located inside the homoclinic net, as illustrated in Figure~\ref{insidehomoclinicnet}.  It is clear from this figure that 
the time spent by these type of orbits $I_k$ is given by 
\begin{align}
T_{I_k}(z,\phi) & =  4kT(z) - T_{I_0}(z,\phi). \label{TimeIkbis}  
\end{align} 

%%%%%%%%%%%%%%%%%%%%%%%%%%%%%%%%%%%%%%%%%%%%%%%%%%%%%%%%%%%%%%%%%%%%%%%%%%%%%%%%%%%%%%%%%%
%
%
%
\begin{figure}[h]
\psfragfig*[scale=0.60]{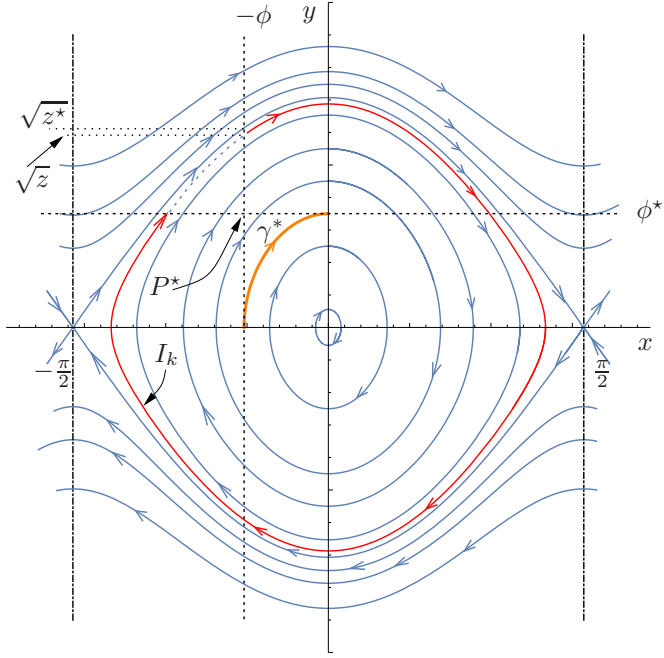}{%
	\psfrag{P}{$P^\star$}
	\psfrag{x}{$x$}
	\psfrag{y}{$y$}
	\psfrag{z}{$\sqrt{z}$}
	\psfrag{z*}{$\sqrt{z^\star}$}
	\psfrag{fi}{$-\phi$}
	\psfrag{I}{$I_k$}
	\psfrag{g}{$\gamma^*$}
	\psfrag{fi*}{$\phi^\star$}
	\psfrag{pi/2}{$\frac{\pi}{2}$}
	\psfrag{-pi/2}{$-\frac{\pi}{2}$}
}
	\caption{An orbit of type $I_k$, with $k=1$, and with $z$ close to $z^\star$.
		The case shown corresponds to $\phi\in\bigl(0,\frac{\pi}{4}\bigr)$, in which case $P^\star$ is inside the homoclinic net.}\label{insidehomoclinicnet}
\end{figure}
%
%
%
%%%%%%%%%%%%%%%%%%%%%%%%%%%%%%%%%%%%%%%%%%%%%%%%%%%%%%%%%%%%%%%%%%%%%%%%%%%%%%%%%%%%%%%%%%

%\medskip

From Proposition~\ref{proptimemaps}, section~\ref{subsec:typeI}, 
and from $V(-\alpha, 0)=V(-\phi, \sqrt{z})$, which results in the relation $\alpha = \frac{1}{2}\arccos(\cos 2\phi -z)$, it is easy to conclude that orbits whose time map 
is given by  \eqref{TimeIkbis} satisfy 
\begin{align}
\frac{\partial T_{I_k}}{\partial z}(z,\phi) & > 0 \label{IncreasingTimeIk}  
\end{align}
and hence the corresponding solution branch in the bifurcation diagram is the graph of
a monotonic increasing function converging to the horizontal asymptote $z= z^\star$
as the time $L$ converges to $+\infty$.

\medskip

In the other hand, for orbits $I_k$ that do not enclose the point $P^\star$
(i.e., those illustrated in Figure~\ref{homoclinicnet}), the computation of 
$\frac{\partial T_{I_k}}{\partial z}(z,\phi)$ faces exactly the same problems as
we confronted in section~\ref{subsec:globalB} for the global behaviour of 
solution branch $B$.

\medskip

Actually, we can repeat the arguments in section~\ref{subsec:localB} to conclude that
$\frac{\partial T_{I_k}}{\partial z}(z,\phi)\to -\infty$ as $z\to 0$. 
Since, as we noted above, $T_{I_k}(z,\phi)\to +\infty$ as $z\to z^\star$,
we conclude that the $T_{I_k}(z,\phi)$ must have at least one minimum
for some\footnote{Observe that, due to \eqref{IncreasingTimeIk}, if
	the interval $(\phi^\star, z^\star)$ is not empty the piece of the solution branch 
	$I_k$ with $z$ in this interval is monotonic and so the minima of the $I_k$ branch
	must necessarily correspond to values $z< \phi^\star$.} 
$z\in (0, \phi^\star)$. A numerical study entirely analogous the one presented
in section~\ref{subsec:globalB} allows us to believe that Conjecture~\ref{C1}
is also valid for the branches $I_k$ with $k\geqs 1$. 

\medskip

From the fact that the time maps for the branches $B_k$ with $k\geqs 1$ are, like in
\eqref{TimeIk}, obtained from the one of branch $B$ by adding $4kT(z)$, the conclusion
we reached for the branches $I_k$ is repeated for the $B_k$s.

\medskip

Thus, from the discussion above and assuming Conjecture~\ref{C1}
holds true for the branches $I_k$ and $B_k$ with $k\geqs 1$, the
bifurcation scheme for the solution branches with $k\geqs 1$ is 
shown in Figure~\ref{globalIkbif}.

%%%%%%%%%%%%%%%%%%%%%%%%%%%%%%%%%%%%%%%%%%%%%%%%%%%%%%%%%%%%%%%%%%%%%%%%%%%%%%%%%%%%%%%%%%
%
%
%
\begin{figure}[h]
	\psfragfig*[scale=0.27]{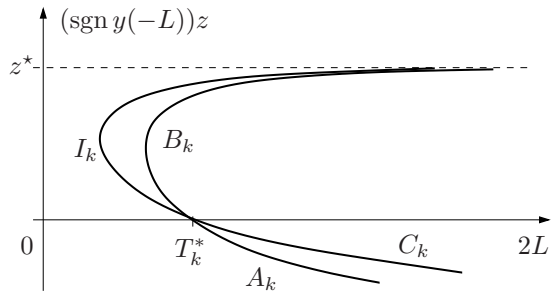}{%
	\psfrag{0}{$0$}
	\psfrag{L}{$2L$}
	\psfrag{y}{$(\sgn y(-L))z$}
	\psfrag{z}{$z^\star$}
	\psfrag{A}{$A_k$}
	\psfrag{B}{$B_k$}
	\psfrag{C}{$C_k$}
	\psfrag{I}{$I_k$}
	\psfrag{T0}{$T^*_k$}
}
	\caption{Bifurcating branches $I_k, A_k, B_k$ and $C_k$,
		with $k\geqs 1$, assuming the behaviour of the branch $B_0$
		stated in Conjecture~\ref{C1} is also valid for branches $I_k$ and $B_k$. 
		Here $T^*_k := 4kT(\phi).$}\label{globalIkbif}
\end{figure}
%
%
%
%%%%%%%%%%%%%%%%%%%%%%%%%%%%%%%%%%%%%%%%%%%%%%%%%%%%%%%%%%%%%%%%%%%%%%%%%%%%%%%%%%%%%%%%%%

\medskip

Additional to these orbits, there are another two families of orbits, 
that we denote by $D_k$ and $D_k'$,
which correspond to orbits analogous to $B$ and $B'$ but are 
located above the homoclinic orbit
with positive $y$-component, and turn around the cylindrical phase space $k$ times (see
example in Figure~\ref{figDk}).

%%%%%%%%%%%%%%%%%%%%%%%%%%%%%%%%%%%%%%%%%%%%%%%%%%%%%%%%%%%%%%%%%%%%%%%%%%%%%%%%%%%%%%%%%%
%
%
%
\begin{figure}[h]
	\psfragfig*[scale=0.60]{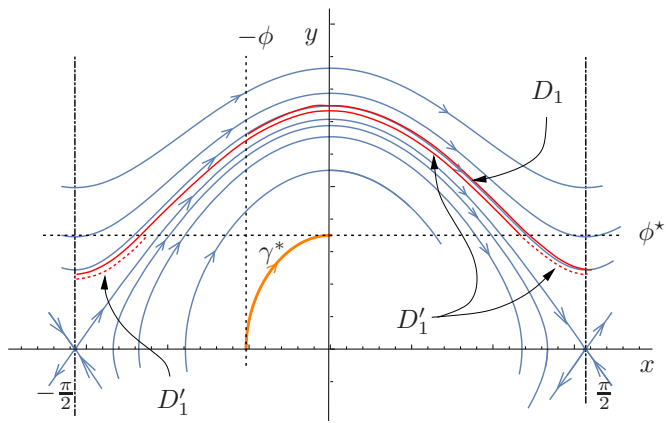}{%
	\psfrag{x}{$x$}	
	\psfrag{y}{$y$}
	\psfrag{fi}{$-\phi$}
	\psfrag{D}{$D_1$}
	\psfrag{D'}{$D_1'$}
	\psfrag{g}{$\gamma^*$}
	\psfrag{fi*}{$\phi^\star$}
	\psfrag{pi/2}{$\frac{\pi}{2}$}
	\psfrag{-pi/2}{$-\frac{\pi}{2}$}
}
	\caption{Orbits of type $D_k$ and $D_k'$, with $k=1$. The orbit $D_1'$ is
		represented by the full line (dashed and undashed); $D_1$ is represented by the undashed
		part only. In this plot each time the orbit $D_1$ or $D_1'$ passes through a same
		point the line is slightly deflected in order to facilitate the reading.}\label{figDk}
\end{figure}
%
%
%
%%%%%%%%%%%%%%%%%%%%%%%%%%%%%%%%%%%%%%%%%%%%%%%%%%%%%%%%%%%%%%%%%%%%%%%%%%%%%%%%%%%%%%%%%%

The amount of time spent by these orbits are given by 
\begin{align}
T_{D_k}(z,\phi) & := 2kT_1\bigl(z,\tfrac{\pi}{2}\bigr)+ T_1(z,\phi)+T_1(z,\bar{x}^\star(z)) \label{TDkmap}\\
T_{D_k'}(z,\phi) & := 2(k+1)T_1\bigl(z,\tfrac{\pi}{2}\bigr)+ T_1(z,\phi)-T_1(z,\bar{x}^\star(z)), \label{TD'kmap}
\end{align}

\noindent
where, for each $\phi\in (0,\tfrac{\pi}{2})$, $z$ varies from $z^\star:=1+\cos 2\phi$, the value 
of ordinate square of the point of intersection of
$x=-\phi$ and the homoclinic orbit, obtained from $V(-\tfrac{\pi}{2},0)=V(-\phi,\sqrt{z^\star})$,
and $2$, the biggest value of $z$ for which the orbit intersects the line $y=\phi^\star$.

\medskip

We can see directly from Figure~\ref{figDk} that when $z=2$ the end points of $D_k$ and $D_k'$
are the same (and coincide with the point $(\tfrac{\pi}{2},\phi^\star)$.) This can also be obtained
from \eqref{TDkmap}-\eqref{TD'kmap}: 
\begin{eqnarray}
T_{D_k'}(z,\phi) & = & 2(k+1)T_1\bigl(z,\tfrac{\pi}{2}\bigr)+ T_1(z,\phi)-T_1(z,\bar{x}^\star(z))\nonumber \\
& = & T_{D_k}(z,\phi) + 2\bigl(T_1\bigl(z,\tfrac{\pi}{2}\bigr)-T_1(z,\bar{x}^\star(z))\bigr)\nonumber \\
& \geqs & T_{D_k}(z,\phi),	\label{Dk'>Dk}
\end{eqnarray}

\noindent
since, from its definition, $T_1(z,\cdot)$ is monotonic increasing, 
and, by \eqref{xstarz}, $\bar{x}^\star(z)\leqs\frac{\pi}{2}$ with equality only 
when $z=2$, because $\bar{x}^\star(2) = \frac{1}{2}\arccos(-1) = \frac{\pi}{2}.$

\medskip

From \eqref{Dk'>Dk} 
we conclude that, for each $k$, the branches $D_k$ and $D_k'$ exist
for all $z\in (z^\star, 2)$ and, in a bifurcation diagram with 
time as the bifurcation variable and $z$ as the dependent variable, 
the $D_k$ branch will always be to the left of the $D_k'$, 
except at one single point, with
ordinate $z=2$, where they coincide.

\medskip

To study the monotonicity of the branches $D_k$ and $D_k'$ we again use the time maps
\eqref{TDkmap} and \eqref{TD'kmap}, respectively. The last is easier: differentiating
\eqref{TD'kmap} with respect to $z$ we obtain, after some algebraic manipulations,
\begin{eqnarray}
\frac{\partial T_{D_k'}}{\partial z}(z,\phi) & = &
- (k+\tfrac{1}{2}) \int_0^{\tfrac{\pi}{2}}(z-\cos 2\phi+\cos 2x)^{-3/2}dx  \nonumber \\
& & -\, \tfrac{1}{2} \int_0^{\phi}(z-\cos 2\phi+\cos 2x)^{-3/2}dx   \nonumber \\
& & -\, \tfrac{1}{2} \int_{\bx(z)}^{\tfrac{\pi}{2}}(z-\cos 2\phi+\cos 2x)^{-3/2}dx \nonumber \\ 
& & -\,
\tfrac{1}{2}z^{-\frac{1}{2}}(2-z)^{-\frac{1}{2}}(1 - \cos 2\phi)^{-\frac{1}{2}}\nonumber \\
& < & 0,
\end{eqnarray}

\noindent
and $\frac{\partial T_{D_k'}}{\partial z} \to -\infty$ when $z\to z^\star$ and when $z\to 2$.
 
\medskip

The same computation for the branch $D_k$ runs into the difficulties already encountered 
before when studying the branches $B_k$ (with $k\geqs 0$) and $I_k$ (with $k\geqs 1$). The
expression for the derivative of the time map is
\begin{eqnarray}
\frac{\partial T_{D_k}}{\partial z}(z,\phi) & = &
- (k+1) \int_0^{\tfrac{\pi}{2}}(z-\cos 2\phi+\cos 2x)^{-3/2}dx  \nonumber \\
& & -\, \tfrac{1}{2} \int_0^{\phi}(z-\cos 2\phi+\cos 2x)^{-3/2}dx   \nonumber \\
& & -\, \tfrac{1}{2} \int_0^{\bx(z)}(z-\cos 2\phi+\cos 2x)^{-3/2}dx \nonumber \\ 
& & +\,
\tfrac{1}{2}z^{-\frac{1}{2}}(2-z)^{-\frac{1}{2}}(1 - \cos 2\phi)^{-\frac{1}{2}},
\end{eqnarray}
and $\frac{\partial T_{D_k}}{\partial z} \to -\infty$ when $z\to z^\star$, and 
$\frac{\partial T_{D_k}}{\partial z} \to +\infty$ when $z\to 2$.
The existence of a unique minimum of $T_{D_k}(\cdot,\phi)$ can be checked 
numerically like was done in the case of the $B$ branch in section~\ref{subsec:globalB}, but,
as there, the rigorous proof eludes us at present. All numerical evidence points
to the validity of Conjecture~\ref{C1} also for the function $z\mapsto T_{D_k}(z,\phi),$
and assuming this the situation with branches $D_k$ and $D_k'$ is illustrated
in Figure~\ref{globalDkbif}.

%%%%%%%%%%%%%%%%%%%%%%%%%%%%%%%%%%%%%%%%%%%%%%%%%%%%%%%%%%%%%%%%%%%%%%%%%%%%%%%%%%%%%%%%%%
%
%
%
\begin{figure}[h]
	\psfragfig*[scale=0.35]{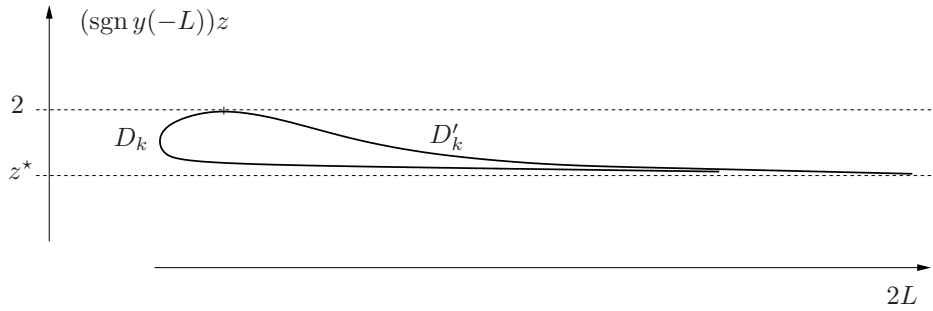}{%
	\psfrag{2}{$2$}
	\psfrag{L}{$2L$}
	\psfrag{y}{$(\sgn y(-L))z$}
	\psfrag{hp}{$z^\star$}
	\psfrag{D1}{$D_k$}
	\psfrag{D1'}{$D_k'$}
	\psfrag{fi*}{$\phi^\star$}
}
	\caption{Solution branches $D_k$ and $D_k'$,
		with $k\geqs 1$, assuming that the behaviour of branch $B$
		stated in Conjecture~\ref{C1} is also valid for branches $D_k$.}\label{globalDkbif}
\end{figure}
%
%
%
%%%%%%%%%%%%%%%%%%%%%%%%%%%%%%%%%%%%%%%%%%%%%%%%%%%%%%%%%%%%%%%%%%%%%%%%%%%%%%%%%%%%%%%%%% 

\section{Conclusion}\label{sec:conclusion}

Gathering the results and discussions from the previous sections we can construct the following 
schematic bifurcation diagram for solutions of \eqref{originalsystem}--\eqref{originalbc}.
%%%%%%%%%%%%%%%%%%%%%%%%%%%%%%%%%%%%%%%%%%%%%%%%%%%%%%%%%%%%%%%%%%%%%%%%%%%%%%%%%%%%%%%%%%
%
%
%
\begin{figure}[h]
	\psfragfig*[scale=0.24]{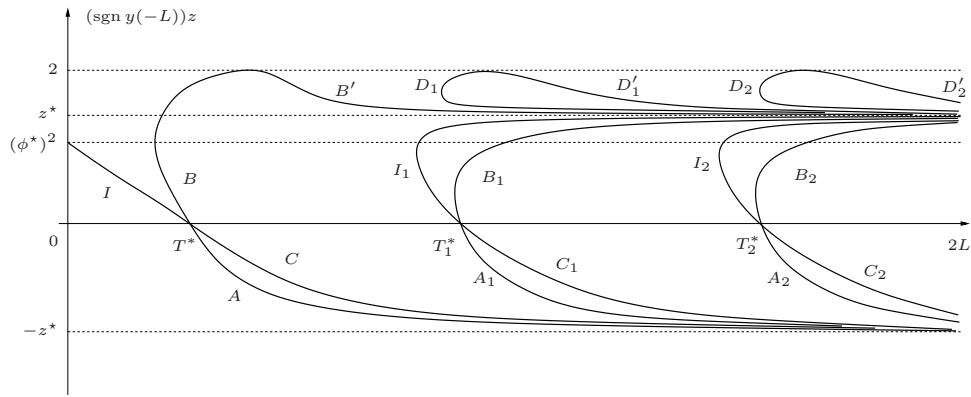}{%
	\psfrag{L}{\tiny{$2L$}}	
	\psfrag{y}{\tiny{$(\sgn y(-L))z$}}
	\psfrag{star}{\tiny{$(\phi^\star)^2$}}
	\psfrag{hp}{\tiny{$z^\star$}}
	\psfrag{-hp}{\tiny{$-z^\star$}}
	\psfrag{0}{\tiny{$0$}}
	\psfrag{2}{\tiny{$2$}}
	\psfrag{A0}{\tiny{$A$}}
	\psfrag{A1}{\tiny{$A_1$}}
	\psfrag{A2}{\tiny{$A_2$}}
	\psfrag{B0}{\tiny{$B$}}
	\psfrag{B'}{\tiny{$B'$}}
	\psfrag{B1}{\tiny{$B_1$}}
	\psfrag{B2}{\tiny{$B_2$}}
	\psfrag{C0}{\tiny{$C$}}
	\psfrag{C1}{\tiny{$C_1$}}
	\psfrag{C2}{\tiny{$C_2$}}
	\psfrag{D1}{\tiny{$D_1$}}
	\psfrag{D1'}{\tiny{$D_1'$}}
	\psfrag{D2}{\tiny{$D_2$}}
	\psfrag{D2'}{\tiny{$D_2'$}}
	\psfrag{I0}{\tiny{$I$}}
	\psfrag{I1}{\tiny{$I_1$}}
	\psfrag{I2}{\tiny{$I_2$}}
	\psfrag{T0}{\tiny{$T^*$}}
	\psfrag{T1}{\tiny{$T^*_1$}}
	\psfrag{T2}{\tiny{$T^*_2$}}
	\psfrag{g}{\tiny{$\gamma^*$}}
	\psfrag{fi*}{\tiny{$\phi^\star$}}
	\psfrag{pi/2}{\tiny{$\frac{\pi}{2}$}}
	\psfrag{-pi/2}{\tiny{$-\frac{\pi}{2}$}}
	\centering
}
	\caption{Schematic bifurcation diagram for system \eqref{originalsystem}--\eqref{originalbc} with $\phi\in \bigl(0,\tfrac{\pi}{4}\bigr)$, assuming validity of Conjecture~\ref{C1} for the
		relevant branches, as explained in the text. For the notation
		used in this figure see the text.
	}\label{fbifurcations}
\end{figure}
%
%
%
%%%%%%%%%%%%%%%%%%%%%%%%%%%%%%%%%%%%%%%%%%%%%%%%%%%%%%%%%%%%%%%%%%%%%%%%%%%%%%%%%%%%%%%%%%

\section*{Acknowledgements}

\noindent
FPdC and JTP were partially funded by FCT/Portugal through project RD0447/ CAMGSD/2015.
FPdC acknowledges financial support provided by the University of Strathclyde
David Anderson Research Professorship.
Parts of sections 3, 4.1, and 4.2 first appeared in the dissertation submitted by KX 
to the National University of Laos, Vientiane, Laos, in July 2017,
as part of the requirements for the degree of MSc in Applied Mathematics.

\end{document}